\newtheorem{theorem}{Theorem}[section]
\newtheorem{lemma}[theorem]{Lemma}
\theoremstyle{definition}
\newtheorem{definition}[theorem]{Definition}
\theoremstyle{remark}
\newtheorem{remark}[theorem]{Remark}
\numberwithin{equation}{section}
\newcommand{\N}{\mathbb{N}}
\newcommand{\Z}{\mathbb{Z}}
\newcommand{\Span}{\text{span}}
\begin{document}

\title{Hereditarily hypercyclic subspaces}

\author[Q. Menet]{Quentin Menet}
\address{Institut de Mathématique\\
Université de Mons\\
20 Place du Parc\\
7000 Mons, Belgique}
\email{Quentin.Menet@umons.ac.be}
\thanks{The author is supported by a grant of FRIA}

\subjclass[2010]{Primary 47A16}
\keywords{Hypercyclic operators; Hypercyclic subspaces}

\date{}

\begin{abstract}
We say that a sequence of operators $(T_n)$ possesses hereditarily hypercyclic subspaces along a sequence $(n_k)$ if for any subsequence $(m_k)\subset(n_k)$, the sequence $(T_{m_k})$ possesses a hypercyclic subspace.  While so far no characterization of the existence of hypercyclic subspaces in the case of Fr\'{e}chet spaces is known, we succeed to obtain a characterization of sequences $(T_n)$ possessing hereditarily hypercyclic subspaces along $(n_k)$, under the assumption that the sequence $(T_n)$ satisfies the Hypercyclicity Criterion along $(n_k)$.
We also obtain a characterization of operators possessing a hypercyclic subspace under the assumption that $T$ satisfies the Frequent Hypercyclicity Criterion.
\end{abstract}
\maketitle

\section{Introduction}

We denote by $\N$ the set of positive integers and by $\Z_+$ the set of non-negative integers.

 Let $X$, $Y$ be Fréchet spaces and $L(X,Y)$ the space of continuous linear operators from $X$ to $Y$. A sequence $(T_n)_{n\ge 0}\subset L(X,Y)$ is said to be hypercyclic if there exists a vector $x\in X$ (also called a hypercyclic vector) such that the orbit $\{T_n x:n\ge 0\}$ is dense in $Y$. We denote by $\text{HC}((T_n))$ the set of hypercyclic vectors for the sequence $(T_n)$. An operator $T\in L(X)$ is said to be hypercyclic if the sequence $(T^n)$ is hypercyclic and we denote by  $\text{HC}(T)$ the set of hypercyclic vectors for $T$.
Given a hypercyclic operator $T$, one can wonder whether $T$ possesses a lot of hypercyclic vectors and which is the structure of $\text{HC}(T)$. At the beginning of the twentieth century, Birkhoff proved that if an operator $T$ is hypercyclic, then the set $\text{HC}(T)$ has to be a dense $G_{\delta}$ set. Bourdon and Herrero~\cite{Bourdon, Herrero} have then complemented this result by showing that if an operator $T$ is hypercyclic, then the set  $\text{HC}(T)\cup \{0\}$ contains a dense infinite-dimensional subspace.

We say that $(T_n)_{n\ge 0}$ possesses a hypercyclic subspace if there exists an infinite-dimensional closed subspace in which each non-zero vector is hypercyclic. Bernal and Montes~\cite{3Bernal} remarked in 1995 that non-trivial translation operators on the space of entire functions possess a hypercyclic subspace, and in 1996, Montes~\cite{3Montes} proved that the scalar multiples of the backward shift $\lambda B$ on $l^p$ ($1\le p<\infty$) do not possess any hypercyclic subspace. Therefore, a natural question is:
\begin{center}
Which hypercyclic operators possess a hypercyclic subspace?
\end{center}

In the case of \emph{complex Banach spaces}, the answer to this question was obtained by Gonz\'alez, Le\'on and Montes~\cite{3Gonzalez} in 2000 for weakly mixing operators, where an operator $T$ is said to be \emph{weakly mixing} if $T\times T$ is hypercyclic.

\begin{theorem}[\cite{3Gonzalez}]\label{3ThmMontes}
Let $X$ be a separable complex Banach space and $T\in L(X)$. If $T$ is weakly mixing, then the following assertions are equivalent:
\begin{enumerate}[\upshape (1)]
\item $T$ has a hypercyclic subspace;
\item there exist an increasing sequence $(n_k)$ of positive integers and an infinite-\\dimensional closed subspace $M_0$ of $X$ such that $T^{n_k}x\rightarrow 0$ for all $x\in M_0$;
\item there exist an increasing sequence $(n_k)$ of positive integers and an infinite-\\dimensional closed subspace $M_b$ of $X$ such that $\sup_k\|T^{n_k}|_{M_b}\|<\infty$;
\item the essential spectrum of $T$ intersects the closed unit disk.
\end{enumerate}
\end{theorem}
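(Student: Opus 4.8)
The plan is to prove the five implications $(2)\Rightarrow(3)$, $(3)\Rightarrow(4)$, $(4)\Rightarrow(2)$, $(2)\Rightarrow(1)$ and $(1)\Rightarrow(4)$; these close into the loops $(2)\Rightarrow(3)\Rightarrow(4)\Rightarrow(2)$ and $(1)\Rightarrow(4)\Rightarrow(2)\Rightarrow(1)$, whence all four assertions are equivalent. The first is free: if $T^{n_k}x\to0$ for every $x\in M_0$ then $\sup_k\|T^{n_k}x\|<\infty$ pointwise on the Banach space $M_0$, and Banach--Steinhaus upgrades this to $\sup_k\|T^{n_k}|_{M_0}\|<\infty$, so $M_b:=M_0$ works. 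The remaining four split into a \emph{spectral} group, $(3)\Rightarrow(4)$ and $(1)\Rightarrow(4)$, resting on semi-Fredholm theory, and a \emph{constructive} group, $(4)\Rightarrow(2)$ and $(2)\Rightarrow(1)$, where the weak-mixing hypothesis is genuinely used.

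For $(3)\Rightarrow(4)$ I would argue by contraposition in the Calkin algebra $\mathcal C=L(X)/K(X)$. If $\sigma_e(T)\cap\overline{\mathbb D}=\emptyset$ then $\pi(T)$ is invertible in $\mathcal C$ with $\sigma_{\mathcal C}(\pi(T))\subseteq\{|z|>1\}$, so $\pi(T)^{-1}$ has spectral radius $<1$ and $\|\pi(T)^{-n}\|_{\mathcal C}\to0$. Lifting, pick $S_n$ with $\pi(S_n)=\pi(T)^{-n}$; then $S_nT^n=I+\tilde K_n$ with $\tilde K_n$ compact, and since $\operatorname{dist}(S_n,K(X))=\|\pi(T)^{-n}\|_{\mathcal C}\to0$ there are compact $J_n$ with $\|S_n-J_n\|\to0$. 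Writing $x=(S_n-J_n)T^nx+A_nx$ with $A_n:=J_nT^n-\tilde K_n$ compact, we get $\|x\|\le\|S_n-J_n\|\,\|T^nx\|+\|A_nx\|$ for all $x$. Were there an infinite-dimensional $M_b$ with $\sup_k\|T^{n_k}|_{M_b}\|=:C<\infty$, choosing $k$ so that $\|S_{n_k}-J_{n_k}\|\,C<\tfrac12$ would force $\|A_{n_k}x\|\ge\tfrac12\|x\|$ on $M_b$, i.e. a compact operator bounded below on an infinite-dimensional subspace --- impossible. The necessary direction $(1)\Rightarrow(4)$ is of the same flavour but harder: when $\sigma_e(T)\cap\overline{\mathbb D}=\emptyset$ the computation above shows $T$ to be \emph{essentially expanding}, yet the vectors of a hypercyclic subspace have unbounded orbits, so the clean bounded-below contradiction is unavailable; ruling out an infinite-dimensional subspace of hypercyclic vectors requires tracking the orbit as it leaves a finite-codimensional expanding subspace, and I expect this to be one of the delicate points.

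For $(4)\Rightarrow(2)$ the weak-mixing hypothesis is indispensable --- the unweighted bilateral shift satisfies $(4)$ but, being an isometry, satisfies neither $(2)$ nor $(1)$. Starting from $\lambda\in\sigma_e(T)\cap\overline{\mathbb D}$, the non-Fredholmness of $T-\lambda$ yields a singular sequence, i.e. a normalised $(x_j)$ with no convergent subsequence and $\|(T-\lambda)x_j\|\to0$, which a Bessaga--Pe\l czy\'nski refinement turns into a basic sequence with these quantities decaying arbitrarily fast. When $|\lambda|<1$ a gliding-hump selection of a subsequence $(u_j)$ and of times $(n_k)$ makes the error terms in $T^{n_k}(\sum a_ju_j)=\lambda^{n_k}\sum a_ju_j+\sum_j a_j(T^{n_k}u_j-\lambda^{n_k}u_j)$ negligible, and $\lambda^{n_k}\to0$ kills the main term, giving $T^{n_k}|_{M_0}\to0$ on $M_0:=\overline{\Span}(u_j)$. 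The crux is that this produces vanishing only for $|\lambda|<1$; I would use weak mixing, through the maps supplied by the Hypercyclicity Criterion, to supply the decay that the essential spectrum alone does not give when $|\lambda|=1$ (the bilateral shift shows this decay can genuinely fail without weak mixing), and correctly merging the almost-eigenvectors with the Criterion is where I expect the real work in this implication to lie.

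Finally $(2)\Rightarrow(1)$ is the heart of the theorem. Weak mixing means $T$ satisfies the Hypercyclicity Criterion, so along a common subsequence of $(n_k)$ there are a dense set sent to $0$ by $T^{n_k}$ and maps $S_{n_k}\to0$ with $T^{n_k}S_{n_k}\to I$ on a dense set. Fixing a dense sequence of targets $(y_i)$, I would construct inductively a basic sequence $(z_i)$ obtained by perturbing a basic sequence inside $M_0$ by rapidly decreasing corrections drawn from the Criterion, arranged so that for a suitable $n_k$ the $M_0$-part of any $z=\sum a_iz_i$ is annihilated by $(2)$ while one engineered coordinate reproduces a prescribed $y_i$ up to $\varepsilon$. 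The genuine difficulty --- and the reason $(2)$ rather than merely $(3)$ is the right hypothesis --- is uniformity: one must guarantee that \emph{every} non-zero vector of the closed span $\overline{\Span}(z_i)$ is hypercyclic, so the tails of all these vectors must be controlled simultaneously. This is achieved by careful book-keeping of the indices together with a summable perturbation series preserving both the basic-sequence property and every approximation, and I regard this simultaneous control as the principal obstacle of the whole proof.
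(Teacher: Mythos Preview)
The paper does not prove this theorem: it is quoted from Gonz\'alez, Le\'on and Montes~\cite{3Gonzalez} and stated without proof. The only comment the paper makes is that ``the key of the proof of this theorem is the notion of essential spectrum,'' and later (in the proof of Theorem~\ref{thmcomplban}) it invokes ``the proof of Theorem~\ref{3ThmMontes}'' as a black box to extract, from a hypercyclic subspace and the Hypercyclicity Criterion along $(n_k)$, a subsequence $(m_k)\subset(n_k)$ and an $M_0$ with $T^{m_k}x\to0$ on $M_0$. So there is nothing in this paper to compare your sketch against.

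On the merits of your outline: the easy parts $(2)\Rightarrow(3)$ and $(3)\Rightarrow(4)$ are fine, and your description of $(2)\Rightarrow(1)$ is exactly the perturbation-of-basic-sequences mechanism behind Criterion~$M_0$ (Theorem~\ref{3thm M0}), which the paper also cites rather than proves. The two places you yourself flag as incomplete are genuine gaps. For $(1)\Rightarrow(4)$ you give no argument; in \cite{3Gonzalez} this is handled via semi-Fredholm theory and is not a minor variation of your $(3)\Rightarrow(4)$ computation. For $(4)\Rightarrow(2)$ your approximate-eigenvector argument only yields decay when $|\lambda|<1$; your proposal to ``merge the almost-eigenvectors with the Criterion'' when $|\lambda|=1$ is a hope, not a proof, and this is precisely the step where \cite{3Gonzalez} uses the structure of the essential spectrum (and the weak-mixing hypothesis) in a substantial way. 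Note also that several inequivalent notions of essential spectrum circulate; the argument in \cite{3Gonzalez} is sensitive to which one is meant, and your Calkin-algebra reading in $(3)\Rightarrow(4)$ should be checked against the definition actually used there before you rely on it elsewhere.
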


The key of the proof of this theorem is the notion of essential spectrum. However, this notion loses almost all its properties in the case of real Banach spaces and in the case of real or complex Fr\'{e}chet spaces. Some criteria for the existence or the non-existence of hypercyclic subspaces in the case of Fr\'{e}chet spaces have been obtained by Bernal, Bonet, Grosse-Erdmann, Mart\'{i}nez, Peris, Petersson and the author \cite{Bernal, 3Bonet, 2Grosse2, 4Menet2, 3Petersson}. Nevertheless, we do not know a complete characterization of operators with hypercyclic subspaces on these spaces.

In 1999, Bès and Peris~\cite{2Bes} showed that an operator $T$ is weakly mixing if and only if $T$ is hereditarily hypercyclic, i.e. there exists an increasing sequence $(n_k)_{k\ge 1}$ such that for any subsequence $(m_k)_{k\ge 1}\subset (n_k)_{k\ge 1}$, the sequence $(T_{m_k})_{k\ge 1}$ is hypercyclic. In view of this notion of hereditary hypercyclicity, we say that a sequence $(T_n)\subset L(X,Y)$ possesses \emph{hereditarily hypercyclic subspaces} if there exists a sequence $(n_k)$ such that for any subsequence $(m_k)\subset (n_k)$, the sequence $(T_{m_k})_{k\ge 1}$ possesses a hypercyclic subspace. The goal of this paper consists in studying which hypercyclic sequences of operators on Fréchet spaces possess hereditarily hypercyclic subspaces.

Obviously, if $T$ possesses hereditarily hypercyclic subspaces, then $T$ possesses a hypercyclic subspace and $T$ is hereditarily hypercyclic. On the other hand, it is interesting to remark that if $X$ is a complex Banach space and $T\in L(X)$, then we can deduce from Theorem~\ref{3ThmMontes} and from the equivalence between hereditarily hypercyclic operators and weakly mixing operators that $T$ possesses hereditarily hypercyclic subspaces if and only if $T$ is weakly mixing and $T$ possesses a hypercyclic subspace (Theorem~\ref{thmcomplban}). We do not know if this equivalence remains true if $T$ is an operator on real or complex Fréchet spaces. However, it motivates us to study the sequences of operators possessing hereditarily hypercyclic subspaces.

We say that a sequence $(T_n)\subset L(X,Y)$ possesses \emph{hereditarily hypercyclic subspaces along $(n_k)$} if for any subsequence $(m_k)\subset (n_k)$, the sequence $(T_{m_k})_{k\ge 1}$ possesses a hypercyclic subspace.
While so far no characterization of the existence of hypercyclic subspaces in the case of Fr\'{e}chet spaces is known, we succeed to obtain a characterization of sequences $(T_n)$ possessing hereditarily hypercyclic subspaces along $(n_k)$, under the assumption that the sequence $(T_n)$ satisfies the Hypercyclicity Criterion along $(n_k)$ (Section~\ref{6herec}). Our approach differs from the approach of Gonz\'alez, Le\'on and Montes~\cite{3Gonzalez}  by the absence of the use of spectral theory.

In Section~\ref{6FHC}, we use the ideas of the above characterization to characterize operators possessing a hypercyclic subspace under the assumption that $T$ satisfies the Frequent Hypercyclicity Criterion. If $T$ satisfies the Frequent Hypercyclicity Criterion, then $T$ is weakly-mixing. Our assumption is thus stronger than the assumption of Theorem~\ref{3ThmMontes}. In return, the obtained characterization works for any real or complex Fréchet spaces with a continuous norm and can be extended to Fréchet spaces without continuous norm through the notion of hypercyclic subspaces of type~$1$.

\section{Characterization of sequences possessing hereditarily hypercyclic subspaces}\label{6herec}

We will use the following vocabulary:

\begin{definition}
Let $X$, $Y$ be Fr\'{e}chet spaces, $(T_n)\subset L(X,Y)$ and $(n_k)_{k\ge 1}$ an increasing sequence of integers. 
\begin{itemize}
\item The sequence $(T_n)_{n\ge 0}$ possesses a \emph{hypercyclic subspace along $(n_k)$} if the sequence $(T_{n_k})_{k\ge 1}$ possesses a hypercyclic subspace.
\item The sequence $(T_n)$ possesses \emph{hereditarily hypercyclic subspaces along} $(n_k)$ if for any subsequence $(m_k)\subset(n_k)$, the sequence $(T_n)_{n\ge 0}$ possesses a hypercyclic subspace along $(m_k)$.
\item The sequence $(T_n)$ possesses \emph{hereditarily hypercyclic subspaces} if there exists a sequence $(n_k)$ such that $(T_n)$ possesses hereditarily hypercyclic subspaces along $(n_k)$. 
\end{itemize}
If $T\in L(X)$, the above definitions extend to $T$ by considering the sequence $(T^n)_{n\ge 0}$.
\end{definition}

Obviously, if $T$ possesses hereditarily hypercyclic subspaces, then the operator $T$ is hereditarily hypercyclic and possesses a hypercyclic subspace. We start by showing that if $T$ is an operator on a complex Banach space then the converse is also true. We first recall the well-known Hypercyclicity Criterion.

\begin{theorem}[Hypercyclicity Criterion \cite{2Grosse2}]
Let $X$ be a Fréchet space, $Y$ a separable Fréchet space and $(T_n)_{n\ge 0}\subset L(X,Y)$. If there are dense subsets $X_0\subset X$, $Y_0\subset Y$, an increasing sequence $(n_k)_{k\ge 1}$ of positive integers and maps $S_{n_k}:Y_0\to X$, such that for any $x\in X_0$, $y\in Y_0$,
\begin{enumerate}
 \item $T_{n_k}x\rightarrow 0$,
 \item $S_{n_k}y\rightarrow 0$,
 \item $T_{n_k}S_{n_k}y\rightarrow y$,
\end{enumerate}
then $(T_n)$ is weakly mixing and thus hypercyclic.
\end{theorem}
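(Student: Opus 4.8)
The plan is to establish the statement in two stages: first that conditions (1)--(3) force a form of topological transitivity of $(T_{n_k})$, which through a Baire category argument delivers a dense $G_\delta$ of hypercyclic vectors; and then that the criterion is inherited by the product $(T_n\oplus T_n)$, so that the same argument applied on $X\times X$ yields weak mixing.

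For the first stage I would fix non-empty open sets $U\subset X$ and $V\subset Y$ and seek an index $k$ with $T_{n_k}(U)\cap V\neq\emptyset$. Using the density of $X_0$ and $Y_0$, choose $x\in X_0\cap U$ and $y\in Y_0\cap V$, and set $z_k:=x+S_{n_k}y$. Condition (2) gives $z_k\to x$, so $z_k\in U$ for large $k$; simultaneously $T_{n_k}z_k=T_{n_k}x+T_{n_k}S_{n_k}y\to y$ by (1) and (3), so $T_{n_k}z_k\in V$ for large $k$. Hence $(T_{n_k})$ is topologically transitive. Since $X$ is a Fr\'echet space (hence Baire) and $Y$ is separable, the Birkhoff transitivity theorem for sequences of operators shows that $\text{HC}((T_{n_k}))$ is a dense $G_\delta$; as $\text{HC}((T_{n_k}))\subset\text{HC}((T_n))$, the whole sequence $(T_n)$ is hypercyclic.

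For the second stage I would note that the criterion reproduces itself under squaring. The product sequence $(T_n\oplus T_n)\subset L(X\times X,Y\times Y)$ satisfies the same three conditions along the same $(n_k)$: take the dense sets $X_0\times X_0\subset X\times X$ and $Y_0\times Y_0\subset Y\times Y$ together with the maps $S_{n_k}\oplus S_{n_k}$, and verify (1)--(3) coordinatewise from the corresponding properties of $(T_{n_k})$ and $(S_{n_k})$. Applying the first stage to $(T_n\oplus T_n)$ then shows it is hypercyclic, which is exactly the assertion that $(T_n)$ is weakly mixing; the hypercyclicity of $(T_n)$ follows from this (and was already obtained in the first stage).

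The computations here are routine, so the only points requiring genuine care are, first, to make sure that the limits in (1)--(3) are taken in the fundamental system of seminorms of the respective Fr\'echet spaces, so that ``$\to$'' truly means eventual membership of $z_k$ in $U$ and of $T_{n_k}z_k$ in $V$; and second, to confirm that the Birkhoff transitivity theorem applies in this setting, which only requires $X$ to be Baire and $Y$ to be separable, both of which hold and are preserved under the product passage of the second stage. The restriction to the subsequence $(n_k)$ creates no difficulty, since transitivity along $(n_k)$ is stronger than transitivity along the full sequence and every hypercyclic vector of $(T_{n_k})$ is a hypercyclic vector of $(T_n)$.
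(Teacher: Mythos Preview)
Your proof is correct and follows the standard approach. Note, however, that the paper does not actually prove this theorem: it is quoted from \cite{2Grosse2} as background, with no proof supplied. The argument you outline---showing topological transitivity via the ansatz $z_k=x+S_{n_k}y$, invoking the Birkhoff transitivity theorem for sequences on a Baire domain with separable range, and then observing that the three conditions pass coordinatewise to $(T_n\oplus T_n)$---is precisely the classical proof found in the cited reference, so there is nothing to compare.
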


In fact, if $T$ is an operator on a Fréchet space, we have the following equivalences.

\begin{theorem}[Bès-Peris \cite{2Bes}]\label{2equiv wm}
Let $X$ be a separable Fréchet space and $T\in L(X)$.
The following assertions are equivalent:
\begin{itemize}
\item $T$ satisfies the Hypercyclicity Criterion;
\item $T$ is weakly mixing;
\item $T$ is hereditarily hypercyclic.
\end{itemize}
\end{theorem}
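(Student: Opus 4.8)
The plan is to establish the cycle
\[
\text{HC}\ \Longrightarrow\ \text{weakly mixing}\ \Longrightarrow\ \text{hereditarily hypercyclic}\ \Longrightarrow\ \text{HC},
\]
using as a pivot the following \emph{three open set condition} $(\star)$: writing $N(U,V)=\{n\in\N:\ T^nU\cap V\neq\emptyset\}$, for all nonempty open $U,V\subset X$ and every neighbourhood $W$ of $0$ one has $N(U,W)\cap N(W,V)\neq\emptyset$. The first implication is free: it is exactly the preceding Hypercyclicity Criterion applied to the sequence $(T^n)$. For the pivot, weak mixing $\Rightarrow(\star)$ is immediate, since testing transitivity of $T\oplus T$ on the open sets $U\times W$ and $W\times V$ of $X\times X$ produces an $n$ with $T^nU\cap W\neq\emptyset$ and $T^nW\cap V\neq\emptyset$. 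I also record the reformulation that $T$ is hereditarily hypercyclic along $(n_k)$ if and only if $T^{n_k}U\cap V\neq\emptyset$ for all but finitely many $k$, for every nonempty open $U,V$ (if the complement were infinite a subsequence would miss the pair and fail to be hypercyclic; conversely, this eventual hitting forces every subsequence to be transitive). In particular hereditary hypercyclicity also implies $(\star)$.

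For weakly mixing $\Rightarrow$ hereditarily hypercyclic I would first upgrade $(\star)$ to the statement that the return sets enjoy the finite intersection property: for any finite family $(U_s,V_s)_{s\le r}$ of pairs of nonempty open sets the set $\bigcap_{s\le r}N(U_s,V_s)$ is infinite, equivalently every finite direct sum $T\oplus\cdots\oplus T$ is hypercyclic. This bootstrapping from the case $r=2$ is the standard B\`es--Peris induction that combines $(\star)$ with itself, and I would carry it out first. Granting it, fix a countable base $(U_j)_{j\ge1}$ of nonempty open sets and build $(n_k)$ by induction: having chosen $n_{k-1}$, apply the finite intersection property to the $k^2$ pairs $(U_i,U_j)$ with $i,j\le k$ to select $n_k>n_{k-1}$ in $\bigcap_{i,j\le k}N(U_i,U_j)$. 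Then for each fixed pair $(U_i,U_j)$ and every $k\ge\max(i,j)$ we have $T^{n_k}U_i\cap U_j\neq\emptyset$; since $(U_j)$ is a base, this is precisely the eventual hitting property, so $T$ is hereditarily hypercyclic along $(n_k)$.

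The remaining implication, hereditarily hypercyclic $\Rightarrow$ HC, is where the real work lies, since I must manufacture the data $X_0,Y_0,S_{n_k}$ of the Criterion out of pure transitivity. Fix a dense sequence $(y_l)$ and a decreasing balanced base $(W_p)$ of $0$-neighbourhoods. Conditions (2) and (3) are arranged by passing to a subsequence of $(n_k)$: at stage $k$ only the finitely many fixed open sets $W_k$ and $y_l+W_k$ ($l\le k$) are involved, so eventual hitting of each pair $(W_k,\,y_l+W_k)$ lets me pick $n_k$ together with vectors $S_{n_k}y_l\in W_k$ satisfying $T^{n_k}S_{n_k}y_l\in y_l+W_k$; this forces $S_{n_k}y_l\to0$ and $T^{n_k}S_{n_k}y_l\to y_l$ on the dense set $Y_0=\{y_l\}$. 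Condition (1) is the main obstacle: I need a \emph{dense} set $X_0$ of vectors $x$ with $T^{n_k}x\to0$ along the \emph{whole} sequence, whereas transitivity only supplies, in each $U_j$, points sent into a given $W_p$ at \emph{some} time $n_k$, a different point for each $k$. The standard remedy is a gliding hump: in each $U_j$ I would construct $x^{(j)}$ as the limit of a rapidly convergent series of small corrections, choosing the corrections (after a further thinning of $(n_k)$) so that each partial sum is already $W_p$-small under $T^{n_k}$ on the corresponding block of indices while the tail contributes negligibly; completeness of $X$ secures convergence and continuity of $T$ transfers the estimates to the limit, giving $T^{n_k}x^{(j)}\to0$. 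The vectors $x^{(j)}$ form the required dense $X_0$, which completes the Criterion and closes the cycle.
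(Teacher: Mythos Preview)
The paper does not supply a proof of this statement: it is quoted as a known result of B\`es and Peris, with a reference to \cite{2Bes}, so there is nothing in the paper to compare your argument against.

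On its own merits your outline is essentially the standard one and is correct in spirit. The chain $\text{HC}\Rightarrow\text{WM}\Rightarrow(\star)\Rightarrow$ (all finite direct sums transitive) $\Rightarrow$ hereditarily hypercyclic is fine; the Furstenberg-type bootstrapping you leave as a promise is indeed routine, and your reformulation of hereditary hypercyclicity along $(n_k)$ as eventual hitting of every pair of open sets is correct. For the closing implication, your handling of conditions~(2) and~(3) is sound, and both survive any further thinning of $(n_k)$. The only point that needs more care is condition~(1): as you describe it, the gliding-hump construction of $x^{(j)}\in U_j$ with $T^{n_k}x^{(j)}\to 0$ involves a thinning of $(n_k)$ that \emph{a priori} depends on $j$, so a single sequence serving the whole dense set $X_0=\{x^{(j)}:j\ge1\}$ is not yet in hand. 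The repair is standard: either run the humps for $j=1,\dots,m$ simultaneously at stage $m$, using that the eventual-hitting property passes to the $m$-fold direct sum $T\oplus\cdots\oplus T$ (a finite intersection of cofinite index sets is cofinite), or build $x^{(j)}$ along nested subsequences $(n_k^{(1)})\supset(n_k^{(2)})\supset\cdots$ and pass to the diagonal $(n_k^{(k)})$. With that adjustment the cycle closes.
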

\begin{remark}
There exists some hypercyclic operators that are not weakly mixing~\cite{Rosa}.
\end{remark}

On the other hand, we have at our disposal the following criterion for the existence of hypercyclic subspaces.

\begin{theorem}[Criterion $M_0$ {\cite[Lemma 2.3]{Bernal}}]\label{3thm M0}
Let $X$ be a Fréchet space with a continuous norm, $Y$ a separable Fréchet space and $(T_n)\subset L(X,Y)$. If $(T_n)$ satisfies the Hypercyclicity Criterion along a sequence $(n_k)$ and if there exists an infinite-dimensional closed subspace $M_0$ of $X$ such that $T_{n_k}x\rightarrow 0$ for all $x\in M_0$, then $(T_n)$ possesses a hypercyclic subspace.
\end{theorem}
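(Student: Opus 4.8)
The plan is to realize the desired subspace as the closed linear span of a basic sequence $(e_j)$ obtained by perturbing a basic sequence lying inside $M_0$, where the perturbations encode the approximation data supplied by the Hypercyclicity Criterion. Fix an increasing sequence of seminorms $(\|\cdot\|_q)_q$ generating the topology of $X$ with $\|\cdot\|_1$ an honest norm (available since $X$ has a continuous norm), and seminorms $(p_q)_q$ on $Y$. Since $M_0$ is an infinite-dimensional closed subspace of a Fr\'{e}chet space with a continuous norm, it is classical that $M_0$ contains a basic sequence $(f_j)$, with bounded coordinate functionals on $\overline{\Span}(f_j)$. Using the density of $Y_0$ and the separability of $Y$, fix $(z_l)_{l\ge1}\subset Y_0$ dense in $Y$, and fix an enumeration $t\mapsto(j(t),l(t))$ of $\N\times\N$ in which every pair occurs infinitely often.

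Next I would build, by induction on $t$, a strictly increasing sequence of indices $q_t\in(n_k)$ and correction vectors $w_t\in X_0$. At step $t$, the Criterion provides $S_{q_t}z_{l(t)}$ with $T_{q_t}S_{q_t}z_{l(t)}\to z_{l(t)}$ and $S_{q_t}z_{l(t)}\to0$; approximating this vector by some $w_t\in X_0$ in a sufficiently strong seminorm and taking $q_t$ large, I can arrange simultaneously: (a) $p_1(T_{q_t}w_t-z_{l(t)})<2^{-t}$ and $p_1(T_{q_t}f_i)<2^{-t}$ for $i\le t$ (the latter because $f_i\in M_0$); (b) $p_1(T_{q_t}w_s)<2^{-t}$ for every \emph{earlier} $s<t$ (possible because $w_s\in X_0$, so $T_{q}w_s\to0$, and $q_t$ may be chosen large); (c) $w_t$ so small in the seminorms $\|\cdot\|_q$, $q\le t$, that $p_1(T_{q_s}w_t)<2^{-t}$ for the finitely many \emph{fixed} earlier operators $T_{q_s}$, $s<t$, by continuity; and (d) $\sum_t\|w_t\|_q$ summably small for each $q$. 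I then set $e_j:=f_j+\sum_{t:\,j(t)=j}w_t$.

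For the verification, condition (d) and the small-perturbation theorem for basic sequences make $(e_j)$ basic and equivalent to $(f_j)$, so $M:=\overline{\Span}(e_j)$ is infinite-dimensional and closed and the coordinate functionals are bounded, say $|a_j|\le C\|x\|_{q_0}$ for $x=\sum_ja_je_j$. Take $x\in M\setminus\{0\}$ with least nonzero coordinate $a_{j_0}$, and write $x=x_{M_0}+\sum_t a_{j(t)}w_t$ with $x_{M_0}:=\sum_ja_jf_j\in M_0$, so that $T_qx_{M_0}\to0$. Given $y\in Y$ and $\epsilon>0$, choose $l$ with $p_1(a_{j_0}z_l-y)<\epsilon$ (possible since $\{a_{j_0}z_l\}_l$ is dense), and among the infinitely many stages $t_0$ realizing $(j_0,l)$ pick one with $p_1(T_{q_{t_0}}x_{M_0})<\epsilon$ and $2^{-t_0}<\epsilon$. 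Then
$$T_{q_{t_0}}x=T_{q_{t_0}}x_{M_0}+a_{j_0}T_{q_{t_0}}w_{t_0}+\sum_{t\neq t_0}a_{j(t)}T_{q_{t_0}}w_t,$$
where the first term is $<\epsilon$ by choice of stage, the second is within $|a_{j_0}|2^{-t_0}$ of $a_{j_0}z_l$ by (a), and the last sum is bounded by $C\|x\|_{q_0}\sum_{t\neq t_0}p_1(T_{q_{t_0}}w_t)$, which is $O(\|x\|_{q_0}\,2^{-t_0})$ by the cross-term estimates (b) (for $t<t_0$) and (c) (for $t>t_0$). Hence $p_1(T_{q_{t_0}}x-y)$ is arbitrarily small, the orbit $\{T_{q_t}x\}$ is dense, and therefore so is $\{T_nx:n\ge0\}$; thus every nonzero $x\in M$ is hypercyclic and $M$ is a hypercyclic subspace.

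The delicate point, and the main obstacle, is the uniform control of the infinitely many cross terms $\sum_{t\neq t_0}a_{j(t)}T_{q_{t_0}}w_t$ and of the $M_0$-part for an \emph{arbitrary, unknown} $x$. This is precisely what dictates the four structural choices: placing the corrections $w_t$ inside $X_0$, so that future operators annihilate them along $(n_k)$; exploiting the continuity of each already-fixed operator $T_{q_s}$ to absorb the not-yet-constructed corrections; grouping the entire $(f_j)$-part into a single element of the \emph{closed} subspace $M_0$, whose orbit tends to $0$ by hypothesis; and combining the bounded-coordinate property of the basic sequence with the summable smallness of the $w_t$. Finally, insisting that every pair $(j,l)$ be revisited infinitely often is exactly what allows one to wait for a stage at which $T_{q_{t_0}}x_{M_0}$ is small for the given $x$, a quantity that cannot be controlled at any fixed stage.
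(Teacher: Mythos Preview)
The paper does not prove this statement itself; it is quoted from \cite{Bernal} (Lemma~2.3 there). That said, your approach---perturb a basic sequence lying in $M_0$ by correction vectors manufactured from the Hypercyclicity Criterion, indexed by pairs $(j,l)$ that recur infinitely often, and verify hypercyclicity of every nonzero vector in the closed span of the perturbed sequence---is exactly the technique the paper employs in the proof of the implication $(3)\Rightarrow(1)$ of Theorem~\ref{6thmher} (compare your $e_j=f_j+\sum_{t:j(t)=j}w_t$ with the paper's $f_n=u_n+\sum_j x_{n,j}$). So in spirit your argument matches the paper's methods.

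There is, however, one genuine gap in your verification. You control only the single seminorm $p_1$ on $Y$: conditions (a), (b), (c) are all stated for $p_1$, and you conclude from ``$p_1(T_{q_{t_0}}x-y)$ is arbitrarily small'' that the orbit $\{T_{q_t}x\}$ is dense. Since $Y$ is an arbitrary separable Fr\'{e}chet space, closeness in a single seminorm does not yield density. The repair is the one visible in the paper's proof of Theorem~\ref{6thmher}: escalate the target seminorm with the stage index. Require (a), (b), (c) with $p_t$ in place of $p_1$; this costs nothing in the inductive construction, since at step $t$ you may take $q_t$ as large and $w_t$ as small as you like in any prescribed finite family of seminorms. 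Then, because each pair $(j_0,l)$ recurs at arbitrarily large stages $t_0$, for any given $q$ you may choose $t_0\ge q$ and obtain $p_q(T_{q_{t_0}}x-y)$ small. With this change (and the harmless normalization $\|f_j\|_1=1$ needed to justify the uniform bound $|a_j|\le C\|x\|_{q_0}$, cf.\ Remark~\ref{rembdd}) the argument is complete.
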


We can now prove the desired result. 
\begin{theorem}\label{thmcomplban}
Let $X$ be a complex Banach space and $T\in L(X)$.
The operator $T$ possesses hereditarily hypercyclic subspaces if and only if $T$ is weakly mixing and $T$ possesses a hypercyclic subspace.
\end{theorem}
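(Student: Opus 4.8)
The plan is to split the equivalence into its two implications, the forward one being purely formal and the backward one absorbing all the difficulty. For the forward direction, suppose $T$ possesses hereditarily hypercyclic subspaces, witnessed by a sequence $(n_k)$. Taking the trivial subsequence $(m_k)=(n_k)$ shows that $(T^{n_k})$ possesses a hypercyclic subspace; since any subspace on which the orbit $\{T^{n_k}x\}$ is dense also has $\{T^nx:n\ge 0\}$ dense, this subspace is a hypercyclic subspace for $T$ itself. Moreover, for every subsequence $(m_k)\subset(n_k)$ the sequence $(T^{m_k})$ possesses a hypercyclic subspace, hence contains a hypercyclic vector and is hypercyclic; thus $T$ is hereditarily hypercyclic, and by Theorem~\ref{2equiv wm} it is weakly mixing. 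This half needs no spectral input.

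For the converse I would assume $T$ weakly mixing with a hypercyclic subspace and assemble two ingredients. First, since $T$ is weakly mixing it satisfies the Hypercyclicity Criterion along some increasing sequence $(n_k)$; the three defining limits ($T_{n_k}x\to 0$, $S_{n_k}y\to 0$, $T_{n_k}S_{n_k}y\to y$) all pass to subsequences, so $T$ satisfies the criterion along \emph{every} subsequence of $(n_k)$. Second, because $T$ is weakly mixing and possesses a hypercyclic subspace, Theorem~\ref{3ThmMontes} applies and, through its equivalent conditions, records that the essential spectrum of $T$ meets the closed unit disk --- a property of $T$ alone, independent of any sequence.

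The heart of the matter is to fuse these into a \emph{single} sequence feeding Criterion $M_0$: I would produce an infinite-dimensional closed subspace $M_0$ and a subsequence $(N_k)\subset(n_k)$ with $T^{N_k}x\to 0$ for all $x\in M_0$. The idea is that the passage from the essential-spectrum condition to condition~(2) of Theorem~\ref{3ThmMontes} depends only on $\sigma_{\mathrm{ess}}(T)$, so it can be executed along a subsequence of the \emph{prescribed} sequence $(n_k)$ rather than along an unrelated one. When $\sigma_{\mathrm{ess}}(T)$ meets the open disk this is transparent: an approximate-eigenvector subspace attached to a value $\lambda$ with $|\lambda|<1$ satisfies $T^nx\to 0$ for all $n$, hence along any $(N_k)$; the boundary case $|\lambda|=1$ is the delicate one and would be handled as in \cite{3Gonzalez}. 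With $(N_k)$ and $M_0$ secured, the conclusion is immediate: for an arbitrary subsequence $(m_k)\subset(N_k)$, the sequence $(T^{m_k})$ still satisfies the Hypercyclicity Criterion (inherited from $(n_k)$) and still has $T^{m_k}x\to 0$ on $M_0$ (inherited from $(N_k)$), so Criterion $M_0$ (Theorem~\ref{3thm M0}) yields a hypercyclic subspace for $(T^{m_k})$. Hence $T$ possesses hereditarily hypercyclic subspaces along $(N_k)$, proving Theorem~\ref{thmcomplban}.

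The hard part is exactly the alignment in the previous paragraph. One is \emph{not} free to take the sequence supplied by Theorem~\ref{3ThmMontes}(2) and thin it to the criterion sequence, nor conversely, because weak mixing does not guarantee that an arbitrary increasing sequence admits a subsequence along which the Hypercyclicity Criterion holds --- that stronger selection property is mixing, not weak mixing. Consequently the convergence-to-zero subspace must be rebuilt from the essential spectrum directly over (a subsequence of) the criterion sequence $(n_k)$, and I expect the genuine work to lie in carrying out the $|\lambda|=1$ case of this rebuilt construction.
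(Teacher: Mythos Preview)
Your approach is essentially the same as the paper's: both directions are organized identically, and the converse proceeds by first obtaining a Hypercyclicity-Criterion sequence $(n_k)$ from weak mixing, then extracting a subsequence $(m_k)\subset(n_k)$ together with an infinite-dimensional closed $M_0$ on which $T^{m_k}x\to 0$, and finally applying Criterion~$M_0$ (Theorem~\ref{3thm M0}) to every further subsequence. The only difference is one of presentation: where you unpack the alignment problem via the essential-spectrum route and flag the boundary case $|\lambda|=1$ as the nontrivial point, the paper simply writes ``we deduce from the proof of Theorem~\ref{3ThmMontes}'' and leaves the verification that the $(4)\Rightarrow(2)$ argument of \cite{3Gonzalez} can be run inside a prescribed Hypercyclicity-Criterion sequence to the reader. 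Your last paragraph correctly locates where the genuine content lies; the paper does not spell this out but is appealing to exactly the same fact.
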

\begin{proof}
If $T$ possesses hereditarily hypercyclic subspaces then $T$ is hereditarily hypercyclic and $T$ possesses a hypercyclic subspace. Therefore, since $T$ is weakly mixing if and only if $T$ is hereditarily hypercyclic (Theorem~\ref{2equiv wm}), we deduce the first implication.

On the other hand, if $T$ is weakly mixing and $T$ possesses a hypercyclic subspace, then we first deduce from Theorem~\ref{2equiv wm} that there exists an increasing sequence of integers $(n_k)$ such that $T$ satisfies the Hypercyclicity Criterion along $(n_k)$. Therefore, since $T$ possesses a hypercyclic subspace and $T$ satisfies the Hypercyclicity Criterion along $(n_k)$, we deduce from the proof of Theorem~\ref{3ThmMontes} that there exist a subsequence $(m_k)\subset (n_k)$ and an infinite-dimensional closed subspace $M_0$ such that for any $x\in M_0$, $T^{m_k}x$ converges to $0$ as $k$ tends to infinity. Finally, since $T$ satisfies the Hypercyclicity Criterion along $(n_k)$, we get that for any subsequence $(m'_k)\subset (m_k)$, the sequence $(T^{m'_k})_k$ satisfies the Hypercyclicity Criterion along the whole sequence $(k)$ and we conclude that, for any subsequence $(m'_k)\subset (m_k)$, the sequence $(T^{m'_k})$ possesses a hypercyclic subspace by using Criterion $M_0$ (Theorem~\ref{3thm M0}).
\end{proof}

Concerning Fr\'{e}chet spaces or real Banach spaces, we do not know whether every weakly mixing operator with a hypercyclic subspace possesses hereditarily hypercyclic subspaces.
% It could also be interesting to know whether there exists an operator $T$ on some Fr\'{e}chet space such that $T$ possesses a hypercyclic subspace but $T$ does not possess hereditarily hypercyclic subspaces.
However, we can obtain a characterization of sequences of operators possessing hereditarily hypercyclic subspaces along a sequence $(n_k)$. The rest of this subsection is mainly devoted to the proof of this characterization.

Before stating the characterization of sequences of operators possessing hereditarily hypercyclic subspaces.
We introduce some classic results about the notion of basic sequences.

\begin{definition}
Let $X$ be a Fréchet space. A sequence $(u_k)_{k\ge 1}\subset X$ is \emph{basic} if for every $x\in \overline{\text{span}}\{u_k:k\ge 1\}$, there exists a unique sequence $(\alpha_k)_{k\ge1}$ in $\mathbb{K}$ ($\mathbb{K}=\mathbb{R}$ or $\mathbb{C}$) such that $x=\sum_{k=1}^{\infty}\alpha_ku_k$.
\end{definition}

We can easily construct a basic sequence  in any infinite-dimensional closed subspace thanks to the following two lemmas.

\begin{lemma}[{\cite[Lemma 10.39]{2Grosse2}}]\label{3lem karl}
Let $X$ be a Fréchet space, $F$ a finite-dimensional subspace of $X$, $p$ a continuous seminorm on $X$ and $\varepsilon>0$. Then there exists a closed subspace $E$ of finite codimension such that for any $x\in E$ and $y\in F$,
\[p(x+y)\ge \max\Big(\frac{p(x)}{2+\varepsilon},\frac{p(y)}{1+\varepsilon}\Big).\]
\end{lemma}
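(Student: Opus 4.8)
The plan is to reduce the two-sided $\max$ estimate to a single one-sided estimate and then to produce $E$ as a finite intersection of kernels of Hahn--Banach functionals. First I would observe that it suffices to find a closed subspace $E$ of finite codimension with
\[p(x+y)\ge \frac{p(y)}{1+\varepsilon}\qquad\text{for all }x\in E,\ y\in F,\]
because the triangle inequality then gives $p(x)\le p(x+y)+p(y)\le (2+\varepsilon)\,p(x+y)$, so that $p(x+y)\ge p(x)/(2+\varepsilon)$ automatically holds as well, and the $\max$ follows.

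To build such an $E$, I would set $F_0=F\cap\ker p$ and pass to the finite-dimensional quotient $F/F_0$, on which $p$ induces a genuine norm $\bar p$ whose unit sphere is compact. Fixing $\delta>0$ with $\delta\le \varepsilon/(1+\varepsilon)$, I would choose finitely many $y_1,\dots,y_m\in F$ with $p(y_j)=1$ whose images form a $\delta$-net of that sphere, so that every $y\in F$ with $p(y)=1$ satisfies $p(y-y_j)<\delta$ for some $j$. For each $j$, the dominated (seminorm) version of the Hahn--Banach theorem yields a linear functional $\phi_j$ on $X$ with $\phi_j(y_j)=1$ and $|\phi_j(x)|\le p(x)$ for all $x\in X$; continuity of $\phi_j$ is automatic since $p$ is continuous. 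I would then take $E=\bigcap_{j=1}^m\ker\phi_j$, which is closed and of codimension at most $m$.

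The verification is then short. By homogeneity, and since the case $p(y)=0$ is trivial, it is enough to treat $y\in F$ with $p(y)=1$. Picking $j$ with $p(y-y_j)<\delta$ and using $\phi_j(x)=0$ for $x\in E$, I obtain
\[p(x+y)\ge |\phi_j(x+y)|=|\phi_j(y)|\ge |\phi_j(y_j)|-|\phi_j(y-y_j)|\ge 1-\delta\ge \frac{1}{1+\varepsilon},\]
which is exactly the one-sided estimate, and the first paragraph upgrades it to the full conclusion.

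The only genuine subtlety — and the reason for passing to the quotient — is that $p$ is merely a seminorm, so the $p$-unit sphere of $F$ itself need not be compact and a naive net argument on $F$ could fail. Working in $F/F_0$ repairs this, and the construction stays coherent because each $\phi_j$ automatically annihilates $F_0$ (as $|\phi_j|\le p$ and $p$ vanishes on $F_0$), so the $\delta$-net on the quotient sphere is precisely what the functionals detect. Apart from this point the argument is routine, the essential ingredients being compactness of spheres in finite dimensions and the dominated Hahn--Banach extension.
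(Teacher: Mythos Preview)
Your argument is correct and is essentially the standard Mazur--type construction: reduce to the one-sided estimate via the triangle inequality, take a $\delta$-net on the $p$-unit sphere of $F$ (modulo $F\cap\ker p$), extend norming functionals by Hahn--Banach, and let $E$ be the intersection of their kernels. Note, however, that the paper does not give its own proof of this lemma; it is quoted verbatim from \cite[Lemma~10.39]{2Grosse2}, where precisely this argument appears, so there is nothing to compare against here beyond observing that your write-up matches the classical proof.
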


\begin{lemma}[\cite{4Menet1}]\label{3lem basic}
Let $X$ be a Fréchet space with a continuous norm, $(p_n)$ an increasing sequence of norms defining the topology of $X$ and $(\varepsilon_n)_{n\ge 1}$ a sequence of positive numbers with $\prod_{n} (1+\varepsilon_n)=K<\infty$.
If $(u_k)_{k\ge 1}$ is a sequence of non-zero vectors in $X$ such that for any $n\ge 1$, for any $j\le n$, for any $\alpha_1,\ldots,\alpha_{n+1}\in \mathbb{K}$,
\begin{equation}p_j\Big(\sum_{k=1}^n \alpha_k u_k\Big)\le (1+\varepsilon_n)p_j\Big(\sum_{k=1}^{n+1}\alpha_k u_k\Big),\label{3lem1}
\end{equation}
then this sequence is basic in $(X,p_n)$ for any $n\ge 1$ and thus in $X$.
%; moreover for any $n\ge 1$, the sequence $(u_k)_{k\ge n}$ is basic in $(X,p_n)$ with basic constant less than $K$.
\end{lemma}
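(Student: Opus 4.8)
The goal is to prove Lemma~\ref{3lem basic}: that a sequence $(u_k)$ satisfying the estimate \eqref{3lem1} is basic. The plan is to verify the standard Grunblum-type criterion for basic sequences, namely that the partial-sum projections are uniformly bounded, but carefully tracked across the whole family of norms $(p_n)$ so that we obtain boundedness with respect to each $p_n$ simultaneously, which is what ``basic in $X$'' requires in the Fr\'echet setting.

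First I would fix a norm $p_j$ and show by induction on the number of extra terms that the hypothesis~\eqref{3lem1} telescopes. Precisely, for $m<n$ and any scalars $\alpha_1,\ldots,\alpha_n$, iterating the inequality gives
\[
p_j\Big(\sum_{k=1}^m \alpha_k u_k\Big)\le \prod_{i=m}^{n-1}(1+\varepsilon_i)\,p_j\Big(\sum_{k=1}^{n}\alpha_k u_k\Big)\le K\,p_j\Big(\sum_{k=1}^{n}\alpha_k u_k\Big),
\]
where one must be mildly careful that the constraint $j\le n$ in the hypothesis is respected: for a fixed $j$ the inequality \eqref{3lem1} is available whenever $n\ge j$, so the telescoping is valid for all $m<n$ provided $m\ge j$; the remaining finitely many initial indices are handled separately using that $(u_k)$ consists of non-zero vectors and that we may enlarge the constant on a finite head. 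This yields the uniform bound $p_j(P_m x)\le K\, p_j(x)$ for the canonical partial-sum operators $P_m$ on the dense subspace $\Span\{u_k\}$, with the same constant $K$ for every $j$.

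Next I would turn this uniform estimate into the defining property of a basic sequence. Since the $P_m$ are uniformly bounded in each norm $p_j$, they extend continuously to the closure $\overline{\Span}\{u_k\}$, and for $x$ in this closure the sequence $(P_m x)$ is Cauchy, hence convergent; writing $\alpha_k u_k = P_k x - P_{k-1} x$ recovers an expansion $x=\sum_k \alpha_k u_k$. Uniqueness of the coefficients follows from the fact that the $u_k$ are linearly independent, which is itself a consequence of \eqref{3lem1}: if a nontrivial finite combination vanished, the telescoping estimate would force a nonzero $p_j(u_k)$ to be bounded by $K\cdot 0$, a contradiction. This establishes that $(u_k)$ is basic in $(X,p_n)$ for each $n$, and since the topology of $X$ is generated by the $(p_n)$, it is basic in $X$.

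The main obstacle is bookkeeping rather than conceptual depth: I would expect the delicate point to be the interaction between the index restriction $j\le n$ in hypothesis~\eqref{3lem1} and the desire to bound $P_m$ uniformly across \emph{all} norms $p_j$ with a single constant $K=\prod_n(1+\varepsilon_n)$. One must ensure that the product of $(1+\varepsilon_i)$ picked up during telescoping is always dominated by $K$ regardless of $j$, and handle the low-index terms (where the hypothesis is not directly applicable) by a separate finite-dimensional argument, using the continuity of coordinate functionals on the finite-dimensional space they span. Everything else is the routine translation of a uniform projection bound into the existence and uniqueness of series expansions.
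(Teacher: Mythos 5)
The paper does not actually contain a proof of Lemma~\ref{3lem basic}: it is quoted from \cite{4Menet1}, so there is no in-paper argument to compare against. Your proposal follows what is certainly the intended route, namely the Fr\'echet-space adaptation of Grunblum's criterion: telescope \eqref{3lem1} to get $p_j(P_m x)\le K\,p_j(x)$ for the partial-sum projections $P_m$ with $m\ge j$; handle the finitely many indices $m<j$ by a finite-dimensional argument (for instance $P_m=P_m\circ P_j$, and $P_m$ is automatically $p_j$-bounded on the finite-dimensional space $\Span\{u_1,\dots,u_j\}$ --- this is where one uses that the $p_j$ are norms and that the $u_k$ are non-zero, hence linearly independent); then extend the $P_m$ to the closed span and deduce $P_m x\to x$ by equicontinuity plus density. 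Your identification of the restriction $j\le n$ in \eqref{3lem1} as the one genuinely Fr\'echet-specific wrinkle, and your treatment of it, are correct.

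The one step that needs repair is uniqueness. You claim it ``follows from the fact that the $u_k$ are linearly independent,'' but linear independence of finite combinations does not imply uniqueness of infinite expansions: a linearly independent sequence need not be $\omega$-independent, i.e.\ a series $\sum_k\gamma_k u_k$ can converge to $0$ with non-zero coefficients (take $u_0=\sum_{k\ge1}2^{-k}e_k$ and $u_k=e_k$ in a Banach space with basis $(e_k)$). The correct argument uses the projections you have already built: if $\sum_{k}\gamma_k u_k=0$ converges, apply the continuous extension of $P_m$ --- equivalently, let $n\to\infty$ in the telescoped bound $p_j\big(\sum_{k=1}^{m}\gamma_k u_k\big)\le K\,p_j\big(\sum_{k=1}^{n}\gamma_k u_k\big)$ --- to conclude that $\sum_{k=1}^{m}\gamma_k u_k=0$ for every $m$, and only then invoke finite linear independence to get $\gamma_k=0$ for all $k$. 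With that one-line fix the proof is complete. A minor further point: for basicness in the (possibly non-complete) normed spaces $(X,p_n)$ you should conclude $P_m x\to x$ directly from the density argument rather than via ``Cauchy, hence convergent,'' since completeness is unavailable there; the same three-term estimate that gives the Cauchy property already identifies the limit as $x$.
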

\begin{remark}\label{rembdd}
If $(u_k)_{k\ge 1}$ satisfies \eqref{3lem1} and $p_1(u_k)=1$ for any $k\ge 1$, then, for any convergent series $\sum_{k=1}^{\infty}\alpha_ku_k$, any $n\ge 1$,
\[|\alpha_n|=p_1(\alpha_n u_n)\le p_1 \Big(\sum_{k=1}^{n-1}\alpha_k u_k\Big) + p_1\Big(\sum_{k=1}^{n}\alpha_k u_k\Big)\le 2K p_1\Big(\sum_{k=1}^{\infty}\alpha_ku_k\Big).\]
In other words, for any convergent series $\sum_{k=1}^{\infty}\alpha_ku_k$, the sequence $(\alpha_k)_{k\ge 1}$ is bounded.
\end{remark}

%In particular, we deduce the existence of basic sequences in any infinite-dimensional closed subspace.
%
%\begin{lemma}\label{3exist basic}
%Let $X$ be an infinite-dimensional Fréchet space with a continuous norm, $(p_n)$ an increasing sequence of norms defining the topology of $X$, $M$ an infinite-dimensional subspace and a number $K>1$. Then there exists a basic sequence $(u_n)_{n\ge 1}\subset M$ such that for any $n\ge1$, we have $p_1(u_n)=1$ and the sequence $(u_k)_{k\ge n}$ is basic in $(X,p_n)$ with basic constant less than $K$. Moreover, if the series $\sum_{k=1}^{\infty}\alpha_ku_k$ converges then the sequence $(\alpha_k)_{k\ge 1}$ is bounded. 
%\end{lemma}

The construction of hypercyclic subspaces will rely on the perturbation of basic sequences.

\begin{lemma}[\cite{4Menet1}]\label{3lem equiv}
Let $X$ be a Fréchet space with a continuous norm, $(p_n)_n$ an increasing sequence of norms defining the topology of $X$ and $(\varepsilon_n)_{n\ge 1}$ a sequence of positive numbers with $\prod_{n} (1+\varepsilon_n)=K<\infty$. Suppose that $(u_n)_{n\ge 1}$ is a basic sequence in $X$ satisfying \eqref{3lem1} for the sequence $(\varepsilon_n)$ and that for any $n\ge 1$, we have $p_1(u_n)=1$. If $(f_n)_{n\ge 1}\subset X$ is a sequence satisfying \[\sum_{n=1}^{\infty}2Kp_n(u_n-f_n)<1,\]
then $(f_n)_{n\ge 1}$ is a basic sequence in $X$ equivalent to $(u_n)$, i.e. for every sequence $(\alpha_k)_{k\ge 1}$ in $\mathbb{K}$, the series $\sum_{k=1}^{\infty} \alpha_k f_k$ converges in $X$ if and only if the series $\sum_{k=1}^{\infty} \alpha_k u_k$ converges in $X$.
\end{lemma}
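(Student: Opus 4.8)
The plan is to prove the two convergence implications separately and then read off that $(f_n)$ is basic. Write $g_k = f_k - u_k$, so the hypothesis reads $\sum_{k\ge 1} 2K\, p_k(g_k) < 1$; set $\theta := 2K\sum_{k\ge 1} p_k(g_k) < 1$. The single structural fact I will use repeatedly is that, because $(p_n)$ is increasing, for every fixed $n$ one has $p_n(g_k) \le p_k(g_k)$ once $k \ge n$, so $\sum_{k\ge 1} p_n(g_k) < \infty$ for each $n$; combined with any bound on the coefficients this makes $\sum_k \alpha_k g_k$ absolutely convergent in each norm $p_n$, hence convergent in $X$ by completeness. The other essential ingredient is Remark~\ref{rembdd}: whenever $\sum_k \alpha_k u_k$ converges (in particular for finite sums), $|\alpha_k| \le 2K\, p_1\big(\sum_j \alpha_j u_j\big)$ for every $k$.

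For the easy implication, suppose $x = \sum_k \alpha_k u_k$ converges. By Remark~\ref{rembdd} the coefficients are bounded, $\sup_k |\alpha_k| \le 2K\, p_1(x)$, so by the observation above $\sum_k \alpha_k g_k$ converges in $X$; therefore $\sum_k \alpha_k f_k = \sum_k \alpha_k u_k + \sum_k \alpha_k g_k$ converges.

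The converse is the heart of the argument, and the difficulty is that from the convergence of $\sum_k \alpha_k f_k$ alone I do not yet know the coefficients are bounded. I would recover this from a lower estimate for the $u$-partial sums. Writing $x_N = \sum_{k=1}^N \alpha_k u_k$ and $y_N = \sum_{k=1}^N \alpha_k f_k$, Remark~\ref{rembdd} applied to the finite sum $x_N$ gives $|\alpha_k| \le 2K\, p_1(x_N)$ for $k \le N$, whence
\[ p_1\Big(\sum_{k=1}^N \alpha_k g_k\Big) \le \sum_{k=1}^N |\alpha_k|\, p_k(g_k) \le 2K\, p_1(x_N)\sum_{k=1}^{\infty} p_k(g_k) = \theta\, p_1(x_N). \]
Since $y_N = x_N + \sum_{k=1}^N \alpha_k g_k$, the triangle inequality yields $(1-\theta)\,p_1(x_N) \le p_1(y_N)$; this is exactly where the factor $2K$ in the hypothesis is needed, to force $\theta < 1$. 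Now if $\sum_k \alpha_k f_k$ converges then $(p_1(y_N))_N$ is bounded, hence so is $(p_1(x_N))_N$, and Remark~\ref{rembdd} again furnishes a uniform bound on $(\alpha_k)$. With the coefficients bounded, $\sum_k \alpha_k g_k$ converges in $X$ by the first paragraph, and therefore $\sum_k \alpha_k u_k = \sum_k \alpha_k f_k - \sum_k \alpha_k g_k$ converges.

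Finally I would check that $(f_k)$ is basic. Uniqueness of representation is the only delicate point: if $\sum_k \gamma_k f_k = 0$, then $p_1(y_N) \to 0$, so the estimate $(1-\theta)\,p_1(x_N) \le p_1(y_N)$ forces $p_1(x_N) \to 0$; but the converse implication shows $\sum_k \gamma_k u_k$ converges, and its limit $w$ satisfies $p_1(w) = \lim_N p_1(x_N) = 0$, hence $w = 0$ because $p_1$ is a genuine norm, and then $\gamma_k = 0$ for all $k$ since $(u_k)$ is basic. Existence of representations for elements of $\overline{\Span}\{f_k\}$ follows once one observes that the two implications exhibit the map $T\colon \sum_k \alpha_k u_k \mapsto \sum_k \alpha_k f_k$ as an isomorphism of $\overline{\Span}\{u_k\}$ onto a closed subspace, which must coincide with $\overline{\Span}\{f_k\}$. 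The main obstacle throughout is the converse convergence implication, and within it the lower estimate that restores boundedness of the coefficients from the mere convergence of $\sum_k \alpha_k f_k$.
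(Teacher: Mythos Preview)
The paper does not prove this lemma; it is quoted from \cite{4Menet1} and stated without proof here, so there is no in-paper argument to compare against. Your argument is the standard perturbation proof and is correct: the key step is the lower estimate $(1-\theta)\,p_1(x_N)\le p_1(y_N)$, which recovers a uniform coefficient bound from the convergence of $\sum_k\alpha_kf_k$ and makes both implications go through exactly as you describe.

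The one place I would ask you to add a line is the final paragraph. To conclude that $T\colon\sum_k\alpha_ku_k\mapsto\sum_k\alpha_kf_k$ is a topological isomorphism onto $\overline{\Span}\{f_k\}$ you need continuity of $T$ and of $T^{-1}$ with respect to \emph{every} $p_n$, not just $p_1$; your two implications, as written, only give the $p_1$ estimate. The missing inequalities are immediate from what you already have---for $x=\sum_k\alpha_ku_k$ one has
\[
p_n(Tx-x)\le\sum_{k}|\alpha_k|\,p_n(g_k)\le 2K\,p_1(x)\sum_kp_n(g_k),
\]
and you observed $\sum_kp_n(g_k)<\infty$; combined with $(1-\theta)p_1(x)\le p_1(Tx)\le p_n(Tx)$ this yields $p_n(x)\le C_n\,p_n(Tx)$ and $p_n(Tx)\le C_n'\,p_n(x)$. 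With that in hand, $T$ has closed range containing $\Span\{f_k\}$ and contained in $\overline{\Span}\{f_k\}$, so the range is exactly $\overline{\Span}\{f_k\}$ and every element there has a (unique) representation. This is not a gap in the mathematics, only in the exposition.
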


%\begin{theorem}[Criterion $E$]\label{3no subspace}
%Let $(p_n)$ be an increasing sequence of seminorms defining the topology of $X$.
%If there exists a continuous seminorm $p$ on $X$ such that $p(x)>0$ for every hypercyclic vector $x$ in $X$ and if there exist a sequence of closed subspaces $(E_n)_{n\ge 1}$ of finite codimension, a sequence $(C_n)_{n\ge 1}$ of numbers with $C_n\rightarrow \infty$ and a continuous seminorm $q$ on $Y$ such that for any $n\ge 1$, for any $x\in E_n$, we have
%\[q(T_nx)\ge C_n p_n(x),\]
%then $(T_n)$ does not possess any hypercyclic subspace.
%\end{theorem}

We can now state and show the following characterization of sequences of operators possessing hereditarily hypercyclic subspaces. This characterization is the main result of this paper.

\begin{theorem}\label{6thmher}
Let $X$ be an infinite-dimensional Fr\'{e}chet space with a continuous norm, $Y$ a separable Fr\'{e}chet space and $(T_n)_{n\ge 0}\subset L(X,Y)$. If $(T_n)$ satisfies the Hypercyclicity Criterion along $(n_k)$, then the following assertions are equivalent:
\begin{enumerate}[\upshape (1)]
\item the sequence $(T_n)$ possesses hereditarily hypercyclic subspaces along $(n_k)$;
\item for any subsequence $(m_k)\subset (n_k)$, there exists an infinite-dimensional closed subspace $M\subset X$ such that for any continuous seminorm $q$ on $Y$, we have
\[\liminf_{k\to \infty} q(T_{m_k}x)<\infty \quad\text{for any }x\in M;\]
\item for any subsequence $(m_k)\subset (n_k)$, there exist an infinite-dimensional closed subspace $M_0\subset X$ and an increasing sequence of integers $k(n)$ such that for any continuous seminorm $q$ on $Y$, we have
\[\lim_{n\to \infty} \inf_{k(n-1)\le k< k(n)} q(T_{m_k}x)=0 \quad \text{for any }x\in M_0.\]
\end{enumerate}
\end{theorem}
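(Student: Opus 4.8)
The plan is to prove the cycle $(1)\Rightarrow(2)\Rightarrow(3)\Rightarrow(1)$; note that $(3)\Rightarrow(2)$ is anyway immediate, since a block-infimum tending to $0$ forces $\liminf_k q(T_{m_k}x)=0<\infty$. The implication $(1)\Rightarrow(2)$ is the soft one. Given a subsequence $(m_k)\subset(n_k)$, apply (1) to get a hypercyclic subspace $M$ for $(T_{m_k})$; every nonzero $x\in M$ then has a dense orbit, so $\liminf_k q(T_{m_k}x)=0$ for every continuous seminorm $q$, while $x=0$ is trivial, and the same $M$ witnesses (2).

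For $(3)\Rightarrow(1)$ I would fix $(m_k)\subset(n_k)$ and build a hypercyclic subspace for $(T_{m_k})$ directly, viewing this as a block version of Criterion $M_0$ (Theorem~\ref{3thm M0}). Since $(T_n)$ satisfies the Hypercyclicity Criterion along $(n_k)$, the sequence $(T_{m_k})$ inherits it along the whole index set, supplying dense sets $X_0$, $Y_0$ and maps $S_{m_k}$ with $T_{m_k}x\to0$ on $X_0$, $S_{m_k}y\to0$, and $T_{m_k}S_{m_k}y\to y$. Starting from the subspace $M_0$ and the blocks $k(n)$ furnished by (3), I would take a basic sequence $(u_j)$ in $M_0$ (Lemmas~\ref{3lem karl} and~\ref{3lem basic}) and perturb it into $(f_j)$, with $f_j=u_j+g_j$, the corrections $g_j$ being built from the $S_{m_k}$-vectors so as to force the orbit of every nonzero combination to be dense. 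The recurrence contained in (3) --- inside every late block some time sends the head of the expansion close to $0$ --- is exactly what lets me realise an approximation $T_{m_k}x\approx y$ at a block time while the remaining terms stay negligible; the tails are controlled by choosing each new vector to have tiny image at all previously used times, and basicity of $(f_j)$ follows from Lemma~\ref{3lem equiv} once $\sum_j 2Kp_j(u_j-f_j)<1$. The closed span of $(f_j)$ is then the desired hypercyclic subspace.

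The implication $(2)\Rightarrow(3)$ is the heart of the argument, and here I expect the main obstacle. Fixing $(m_k)\subset(n_k)$, I would construct $M_0=\overline{\Span}\{u_j\}$ and the blocks $k(n)$ by induction, arranging at stage $n$ three things: that $(u_j)$ keeps the estimate~\eqref{3lem1}, so that it stays basic (Lemmas~\ref{3lem basic} and~\ref{3lem equiv}) with bounded coefficients (Remark~\ref{rembdd}); that $u_n$ has tiny image $q(T_{m_k}u_n)$ at all times $k\le k(n-1)$ and for the finitely many relevant seminorms, which makes the tails of every expansion negligible on the already-chosen blocks; and that the block $[k(n-1),k(n))$ contains, for each direction of the compact set of unit vectors in $\Span\{u_1,\dots,u_n\}$, a time at which $q(T_{m_k}\cdot)$ is small. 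The first two requirements are intersections of finitely many finite-codimension conditions and are met using Lemma~\ref{3lem karl}. The decisive difficulty is the third: hypothesis (2) only guarantees $\liminf_k q(T_{m_k}x)<\infty$, i.e.\ recurrence to a \emph{bounded} set, whereas (3) demands recurrence to $0$. Upgrading ``finite'' to ``zero'' is precisely the step performed by spectral theory in the complex Banach case of Theorem~\ref{3ThmMontes}; the intended route here is to replace it by using hypothesis (2) for \emph{all} subsequences simultaneously, together with the null sequences $T_{m_k}x\to0$ on the dense set $X_0$ coming from the Hypercyclicity Criterion. Concretely, I would choose the $u_n$ and the times $k(n)$ so that any persistent failure of a small return inside the blocks could be organised into a single subsequence $(m'_k)\subset(n_k)$ along which no infinite-dimensional subspace keeps $\liminf_k q(T_{m'_k}\cdot)$ finite, contradicting (2) applied to $(m'_k)$; it is the Hypercyclicity-Criterion vectors that make this contradiction possible. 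That both hypotheses are indispensable is illustrated by a non-hypercyclic isometry, which satisfies the finite-$\liminf$ condition for every subsequence yet admits no subspace with returns to $0$.
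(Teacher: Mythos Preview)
Your $(1)\Rightarrow(2)$ is fine, and the outline of $(3)\Rightarrow(1)$ is in the right spirit but skips a non-trivial ingredient: one cannot build the corrections $g_j$ directly from the raw $S_{m_k}$'s and still hit a prescribed target on an \emph{entire block} $[k(\cdot),k(\cdot+1))$ while remaining negligible elsewhere. The paper first manufactures, from the Hypercyclicity Criterion, a special subsequence $(m_k)\subset(n_k)$ with the property that for every $i\le k<j$ there is a single small vector $x$ with $T_{m_l}x$ close to $y_i$ for all $l\in[k,j)$ and tiny for $l\notin[k,j)$; only then is hypothesis~(3) applied to \emph{this} $(m_k)$, and the hypercyclic subspace is constructed along it (hence along $(n_k)$). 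Without this preparatory lemma your perturbation scheme does not close.

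The substantive gap is in $(2)\Rightarrow(3)$. Two of your three inductive requirements fail as written. The condition ``$u_n$ has tiny image at all past times'' is not a finite-codimension linear constraint; in the paper it is obtained by replacing each $u_n\in M$ by a nearby $f_n$ in the dense set $X_0$ from the Hypercyclicity Criterion. More importantly, your third requirement controls the \emph{head} $\sum_{m\le n}\alpha_m u_m$ on block~$n$, but then the \emph{tail} $\sum_{m>n}\alpha_m u_m$, although small in $X$, is hit by an operator $T_{m_l}$ with $l$ in block~$n$ and there is no bound on $q(T_{m_l}(\text{tail}))$; the roles must be reversed. The paper's device is an intermediate condition~(2'): for every seminorm $q$ there is a norm $p$ such that for every $k_0$ one can find $k\ge k_0$ and a closed subspace $E\subset M$ of \emph{finite codimension} on which every vector $x$ satisfies $q(T_{m_j}x)\le p(x)$ for some $j\in[k_0,k]$. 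This is exactly what lets one place each $u_n$ in $\bigcap_{k\le n}E_k$ so that every \emph{tail} lies in $E_n$ and therefore has a good time in block~$n$. The implication $(2)\Rightarrow(2')$ is proved by contraposition via a blow-up lemma: if (2') fails for some $(m_k)$, then for \emph{every} infinite-dimensional closed $M$ one builds a vector $x\in M$ with $q(T_{m_k}x)\to\infty$, contradicting~(2) for that same $(m_k)$. Your proposed route---organising failures into a new subsequence $(m'_k)$ and invoking~(2) there---does not yield a contradiction, because~(2) would only produce some subspace $M'$ adapted to $(m'_k)$, with no relation to the $u_j$'s you are constructing; the argument must stay with the given $(m_k)$ and defeat every candidate subspace simultaneously, which is precisely what the blow-up lemma accomplishes.
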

\begin{proof}

In order to prove these equivalences, we add the following assertion:

\begin{enumerate}[\upshape (2')]
\item \emph{for any subsequence $(m_k)\subset (n_k)$, there exists an infinite-dimensional closed subspace $M\subset X$ such that for any continuous seminorm $q$ on $Y$, there exists a continuous norm $p$ on $X$ such that for any $k_0\ge 0$, there exist $k\ge k_0$ and a closed subspace $E$ of finite codimension in $M$ such that for any $x\in E$, there exists $k_0\le j\le k$ satisfying} \[q(T_{m_j}x)\le p(x).\]
\end{enumerate}

We then prove that $\neg (2')\Rightarrow \neg (2) \Rightarrow \neg (1)$ and $(2')\Rightarrow (3)\Rightarrow (1)$.\\

$\neg (2) \Rightarrow \neg (1)$: By hypothesis, there exists a subsequence $(m_k)\subset (n_k)$ such that for any infinite-dimensional closed subspace $M\subset X$,  there exist a vector $x\in M$ and a continuous seminorm $q$ on $Y$ such that
\[\lim_{k\to \infty} q(T_{m_k}x)=\infty.\]
We deduce that the sequence $(T_n)$ does not possess any hypercyclic subspace along $(m_k)$ and thus that the sequence $(T_n)$ does not possess hereditarily hypercyclic subspaces along $(n_k)$.\\

$\neg (2')\Rightarrow \neg (2)$: This implication directly follows from the following lemma.
\begin{lemma}~\label{lemher1}
Let $X$ be an infinite-dimensional Fr\'{e}chet space with a continuous norm, $Y$ a Fr\'{e}chet space, $(T_n)\subset L(X,Y)$, $(n_k)$ an increasing sequence of positive integers and $M$ an infinite-dimensional closed subspace in $X$. If there exists a continuous seminorm $q$ on $Y$ such that for any continuous norm $p$ on $X$, there exists an integer $k_0\ge 0$ such that for any $k\ge k_0$, any closed subspace $E$ of finite codimension in $M$, there exists a vector $x\in E$ satisfying for any $k_0\le j\le k$, \[q(T_{n_j}x)> p(x),\]
then there exists a vector $x\in M$ such that \[\lim_{k\to \infty} q(T_{n_k}x)=\infty.\]
\end{lemma}
\begin{proof}
Let $q$ be a continuous seminorm on $Y$ such that for any continuous norm $p$ on $X$, there exists an integer $k_0\ge 0$ such that for any $k\ge k_0$, any closed subspace $E$ of finite codimension in $M$, there exists a vector $x\in E$ satisfying for any $k_0\le j\le k$, \[q(T_{n_j}x)> p(x).\]
We seek to construct a vector $x\in M$ such that \[\lim_{k\to \infty} q(T_{n_k}x)=\infty.\]
The idea of the construction of this vector $x$ is similar to the construction given in \cite[Theorem 1.13]{4Menet2}.
(Our hypotheses are in fact the minimal conditions so that the proof of~\cite[Theorem 1.13]{4Menet2} works.)

Let $(p_n)_{n\ge 1}$ be an increasing sequence of norms inducing the topology of $X$. For each continuous norm $n^3p_n$, there exists an integer $k_n\ge 1$ such that for any $k\ge k_n$, any closed subspace $E$ of finite codimension in $M$, there exists a vector $x\in E$ satisfying for any $k_n\le j\le k$, \[q(T_{n_j}x)> n^3p_n(x).\] 
Without loss of generality, we can assume that the sequence $(k_n)_{n\ge 1}$ is increasing.
We can then construct recursively a sequence $(e_n)_{n\ge 0}\subset M$ with $e_0=0$ such that for any $n\ge 1$, we have
\begin{itemize}
\item $p_n(e_n)=\frac{1}{n^2}$;
\item for any $k_{n}\le j<k_{n+1}$, $q(T_{n_j} e_n)> n^3p_n(e_n)=n$;
\item for any $j< k_{n+1}$, $T_{n_j}e_n\in \bigcap_{k\le n-1} L_{k,j}$, where $L_{k,j}$ is the closed subspace of finite codimension given by Lemma \ref{3lem karl} for $F_{k,j}=\text{span}(T_{n_j} e_0, \dots, T_{n_j}e_k)$, $q$ and $\varepsilon>0$.
\end{itemize}
We then let $x:=\sum_{n=1}^{\infty} e_n$ so that $x\in M$ and for any $n\ge 1$, any $k_{n}\le j< k_{n+1}$,
\begin{align*}
q(T_{n_j}x)=q\Big(\sum_{\nu=1}^{\infty}T_{n_j}e_{\nu}\Big)&\ge \frac{1}{1+\varepsilon}q\Big(\sum_{\nu=1}^{n}T_{n_j}e_{\nu}\Big) \quad\text{because $\sum_{\nu=n+1}^{\infty}T_{n_j}e_{\nu}\in L_{n,j}$}\\
&\ge \frac{1}{(1+\varepsilon)(2+\varepsilon)}q(T_{n_j}e_n)\quad\text{because $T_{n_j}e_n\in L_{n-1,j}$}\\
&\ge \frac{n}{(1+\varepsilon)(2+\varepsilon)}.
\end{align*}
\end{proof}

$(2')\Rightarrow (3)$:   Since $(T_n)$ satisfies the Hypercyclicity Criterion along $(n_k)$, the sequence $(T_n)$ satisfies the Hypercyclicity Criterion along any subsequence of $(n_k)$. We can then deduce the implication $(2')\Rightarrow (3)$ from the following lemma.

\begin{lemma}\label{lemher2}
Let $X$ be an infinite-dimensional Fr\'{e}chet space with a continuous norm, $Y$ a separable Fr\'{e}chet space and $T_n: X\rightarrow Y$ a sequence of operators satisfying the Hypercyclicity Criterion along $(n_k)$. Let $M$ be an infinite-dimensional closed subspace of $X$. If for any continuous seminorm $q$ on $Y$, there exists a continuous norm $p$ on $X$ such that for any $k_0\ge 0$, there exist $k\ge k_0$ and a closed subspace $E$ of finite codimension in $M$ such that for any $x\in E$, there exists $k_0\le j\le k$ satisfying \[q(T_{n_j}x)\le p(x),\]
then there exist an infinite-dimensional closed subspace $M_0\subset X$ and an increasing sequence of integers $k(n)$ such that for any continuous seminorm $q$ on $Y$, we have
\[\lim_{n\to \infty} \inf_{k(n-1)\le k< k(n)} q(T_{n_k}x)=0 \quad \text{for any }x\in M_0.\]
\end{lemma}
\begin{proof}
Let $(q_n)$ be an increasing sequence of seminorms inducing the topology of $Y$. There exists by hypothesis a sequence $(p_n)$ of norms on $X$ such that for any $n\ge 1$, any $k_0\ge 0$, there exist $k\ge k_0$ and a closed subspace $E$ of finite codimension in $M$ such that for any $x\in E$, there exists $k_0\le l\le k$ satisfying
 \[q_n(T_{n_l}x)\le p_n(x).\]
Without loss of generality, we can assume that the sequence $(p_n)$ is an increasing sequence inducing the topology of $X$.
 
Let $X_0$ be a dense subset of $X$ given by the Hypercyclicity Criterion for $(n_k)$ and $(\varepsilon_n)_{n\ge 1}$ a sequence of positive numbers such that ${\prod_{n=1}^{\infty}(1+\varepsilon_n)\le 2}$. We construct recursively a sequence $(E_n)_n$ of closed subspaces of finite codimension in $M$, a basic sequence $(u_n)_{n\ge 1}\subset M$, a basic sequence $(f_n)_{n\ge 1}\subset X_0$ equivalent to $(u_n)_{n\ge 1}$, an increasing sequence of positive integers $(k(n))_{n\ge 1}$  with $k(1)=1$ and a sequence of integers $(l_n)$ such that for any $n\ge 1$,
\begin{enumerate}[\upshape (i)]
\item for any $x\in E_n$, any $k\le n$, there exists $k(n)\le l\le k(n)+ l_n$ such that\[q_k(T_{n_{l}}x)\le p_k(x);\]
\item $u_n\in \bigcap_{k\le n}E_k$ and $p_1(u_n)=1$;
\item for any $m\le n-1$, any $\alpha_1,\ldots,\alpha_{n}\in \mathbb{K}$,
\[p_m\Big(\sum_{k=1}^{n-1} \alpha_k u_k\Big)\le (1+\varepsilon_{n-1})p_m\Big(\sum_{k=1}^{n}\alpha_k u_k\Big)\quad \quad (n\ge 2).\]
\item $p_n(u_n-f_n)<\frac{1}{2^{n+2}}$;
\item for any $m\le n$, any $ k(m)\le l\le k(m)+l_m$, \[q_m(T_{n_{l}}(f_n-u_n))\le \frac{1}{2^{m+n}};\]
\item $k(n+1)> k(n)+l_n$;
\item for any $m\le n$, any $l\ge k(n+1)$, \[q_{n+1}(T_{n_{l}} f_m)\le \frac{1}{2^{m+n+1}}.\]
\end{enumerate}
We start by explaining the base case of this construction by induction. We know by hypothesis that there exist an integer $l_1\ge 0$ and a closed subspace $E_1$ of finite codimension in $M$ such that for any $x\in E_1$, there exists $k(1)\le l\le k(1)+l_1$ satisfying \[q_1(T_{n_l}x)\le p_1(x).\]
We choose $u_1\in E_1$ such that $p_1(u_1)=1$ and we choose $f_{1}\in X_0$ close enough to $u_1$ so that (iv) and (v) are satisfied. Finally, we choose $k(2)> k(1)+l_1$ sufficiently large so that (vii) is satisfied.

Suppose that we have already constructed $E_1,\dots, E_{n-1}$, $u_1,\dots,u_{n-1}$, $f_1,\dots,f_{n-1}$, $l_1,\dots,l_{n-1}$ and $k(1),\dots,k(n)$. By hypothesis, for any $m\le n$, there exists $l_{n,m}\ge 0$ and a closed subspace $E_{n,m}$ of finite codimension in $M$ such that for any $x\in E_{n,m}$, there exists $k(n)\le l\le k(n)+ l_{n,m}$ satisfying \[q_{m}(T_{n_{l}}x)\le p_{m}(x).\]
We let $E_{n}=\bigcap_{m=1}^{n}E_{n,m}$ and $l_{n}=\max_{1\le m\le n} l_{n,m}$. We select $u_{n}\in \bigcap_{k\le n}E_k$ such that $p_1(u_{n})=1$ and 
for any $m\le n-1$, any $\alpha_1,\ldots,\alpha_{n}\in \mathbb{K}$,
\[p_m\Big(\sum_{k=1}^{n-1} \alpha_k u_k\Big)\le (1+\varepsilon_n)p_m\Big(\sum_{k=1}^{n}\alpha_k u_k\Big).\]
Such a choice is possible thanks to Lemma~\ref{3lem karl}. We then choose $f_{n}\in X_0$ close enough to $u_{n}$ so that (iv) and (v) are satisfied. We finish by fixing $k(n+1)> k(n)+l_n$ sufficiently large so that (vii) is satisfied.

By Lemma~\ref{3lem basic} and Lemma~\ref{3lem equiv}, the sequences $(u_n)_{n\ge 1}$ and $(f_n)_{n\ge 1}$ are equivalent basic sequences. We let $M_0:=\overline{\Span}\{f_n:n\ge1\}$ and we show that for any $x\in M_0$, any $k\ge 1$,
\begin{equation}\lim_{n\to \infty} \inf_{k(n)\le l\le k(n)+l_n} q_k(T_{n_{l}}x)=0.\label{6eqliminf}
\end{equation}
This will conclude the proof because for any $n\ge 1$, $[k(n),k(n)+l_n]\subset[k(n),k(n+1)[$ and $(q_k)$ is an increasing sequence of seminorms inducing the topology of $Y$.

Let $x=\sum_{n=1}^{\infty}\alpha_nf_n\in M_0$. Since for any $n\ge 1$, $p_1(u_n)=1$ and $(u_n)_{n\ge 1}$ is equivalent to $(f_n)_{n\ge 1}$, we know that the sequence $(\alpha_n)_{n\ge 1}$ is bounded by some constant $K$ (Remark~\ref{rembdd}). Let $k\ge 1$. For any $n\ge k$, any $k(n)\le l\le k(n)+l_n$, we deduce from (v) and (vii) that
\begin{align*}
q_k(T_{n_{l}}x)&= q_k\Big(T_{n_{l}}\Big(\sum_{m=1}^{\infty} \alpha_m f_m\Big)\Big)\\
&\le \sum_{m<n} |\alpha_m| q_n(T_{n_{l}} f_m) + q_k\Big(T_{n_{l}}\Big(\sum_{m\ge n}\alpha_m u_m\Big)\Big)\\
&\quad+ \sum_{m\ge n} |\alpha_{m}| q_n(T_{n_{l}}(f_m-u_m))\\
&\le \sum_{m<n} K \frac{1}{2^{n+m}} + q_k\Big(T_{n_{l}}\Big(\sum_{m\ge n} \alpha_m u_m\Big)\Big)
+ \sum_{m\ge n} K \frac{1}{2^{n+m}}\\
&\le \frac{K}{2^{n}} + q_k\Big(T_{n_{l}}\Big(\sum_{m\ge n} \alpha_m u_m\Big)\Big)
\end{align*}
and as $\sum_{m\ge n} \alpha_m u_m\in E_n$, we deduce from (i) that
\begin{align*}
\inf_{k(n)\le l\le k(n)+l_n} q_k(T_{n_{l}}x)&\le \frac{K}{2^{n}} + \inf_{k(n)\le l\le k(n)+l_n}q_k\Big(T_{n_{l}}\Big(\sum_{m\ge n} \alpha_m u_m\Big)\Big)\\
&\le \frac{K}{2^{n}} + p_k\Big(\sum_{m\ge n} \alpha_m u_m\Big) \xrightarrow[n\to \infty]{} 0. 
\end{align*}
We conclude that \eqref{6eqliminf} is satisfied.
\end{proof}

$(3)\Rightarrow (1)$: We remark that it suffices to prove that if $(T_n)$ satisfies the Hypercyclicity Criterion along $(n_k)$ and if for any subsequence $(m_k)\subset (n_k)$, there exist an infinite-dimensional closed subspace $M_0\subset X$ and an increasing sequence of integers $k(n)$ such that for any continuous seminorm $q$ on $Y$, we have
\[\lim_{n\to \infty} \inf_{k(n-1)\le k< k(n)} q(T_{m_k}x)=0 \quad \text{for any }x\in M_0,\]
then $(T_n)$ possesses a hypercyclic subspace along $(n_k)$.

To this end, we will work with a special subsequence $(m_k)\subset (n_k)$ given by the following lemma.

\begin{lemma}\label{6lemher 3}
Let $X$ be an infinite-dimensional Fr\'{e}chet space with a continuous norm, $Y$ a separable Fr\'{e}chet space and $(T_n)\subset L(X,Y)$. Let $(p_n)$ be an increasing sequence of norms inducing the topology of $X$ and $(q_n)$ an increasing sequence of seminorms inducing the topology of $Y$. Let $(y_i)_{i\ge 1}$ be a dense sequence in $Y$.
If $(T_n)$ satisfies the Hypercyclicity Criterion along $(n_k)$, then there exists a subsequence $(m_k)\subset (n_k)$ such that for any $i\le k<j$, there exists $x\in X$ such that
\begin{enumerate}[\upshape 1.]
\item $p_k(x)< \frac{1}{2^k}$;
\item for any $l\in [k,j[$, $q_k(T_{m_l}x-y_{i})<\frac{1}{2^{k+1}}$;
\item for any $l\notin [k,j[$, $q_{l}(T_{m_l}x)<\frac{1}{2^{k+l+1}}$.
\end{enumerate}
\end{lemma}
\begin{proof}
Let $X_0$ (resp. $Y_0$) be the dense subset in $X$ (resp. $Y$) and $(S_{n_k})$ the sequence of maps given by the Hypercyclicity Criterion along $(n_k)$. We start by proving the existence of a subsequence $(m_l)\subset (n_k)$ and a family $(x_{i,l})_{i\le l}\subset X_0$ such that for any $i\le l$, 
\begin{enumerate}[\upshape (i)]
\item $p_l(x_{i,l})< \frac{1}{2^{l+1}}$;
\item $q_l(T_{m_l}x_{i,l}-y_{i})<\frac{1}{2^{l+2}}$;
\item for any $k< l$, $q_{k}(T_{m_{k}}x_{i,l})<\frac{1}{2^{k+l+2}}$;
\item for any $j\le k<l$, $q_l(T_{m_l}x_{j,k})<\frac{1}{2^{k+l+2}}$.
\end{enumerate}
Suppose that $m_{k}$ and $x_{j,k}\in X_0$ have already been chosen for any $j\le k\le l-1$. We first remark that if we choose $m_l\in (n_k)_{k\ge 1}$ sufficiently large, then for any $j\le k< l$,
\[q_l(T_{m_l}x_{j,k})<\frac{1}{2^{k+l+2}}.\]
 On the other hand, if $m_l\in (n_k)_{k\ge 1}$ is sufficiently large, then for any $i\le l$ we can find a vector $x_{i,l}\in X_0$ satisfying (i), (ii) and (iii).
Indeed, if we consider $y'_i\in Y_0$ such that $q_l(y'_i-y_i)<\frac{1}{2^{l+3}}$, there exists an integer $m$ such that if $m_l\ge m$ with $m_l\in (n_k)_{k\ge 1}$, then for any $k<l$, \[p_l(S_{m_l}y'_i)< \frac{1}{2^{l+1}},\ q_l(T_{m_l}S_{m_l}y'_i-y'_{i})<\frac{1}{2^{l+3}}\ \text{and}\ q_{k}(T_{m_{k}}S_{m_l}y'_i)<\frac{1}{2^{k+l+2}}.\] It then suffices to consider $x_{i,l}\in X_0$ sufficiently close to $S_{m_l}y'_i$.

We remark that it follows from (iii) and (iv) that for any $i\le l'$, any $l\ne l'$,
\begin{equation}
q_{l}(T_{m_{l}}x_{i,l'})<\frac{1}{2^{l+l'+2}}.
\label{6(vi)}
\end{equation}
We now prove that for any $i\le k<j$, there exists $x\in X$ such that 1., 2. and 3. hold. Let $i\le k<j$ and $x:=\sum_{l=k}^{j-1}x_{i,l}$. We have
\begin{enumerate}[\upshape 1.]
\item $p_k(x)\le \sum_{l=k}^{j-1}p_l(x_{i,l})< \sum_{l=k}^{j-1} \frac{1}{2^{l+1}}< \frac{1}{2^k}$ \quad \text{by (i)};
\item for any $l\in [k,j[$, 
\begin{align*}
q_k(T_{m_l}x-y_{i})&\le q_l(T_{m_l}x_{i,l}-y_{i})+ \sum_{l'\in [k,j[\backslash\{l\}}q_{l}(T_{m_l}x_{i,l'})\\
&<\frac{1}{2^{l+2}}+ \sum_{l'\in [k,j[\backslash\{l\}} \frac{1}{2^{l+l'+2}} < \frac{1}{2^{k+1}} \quad \text{by (ii) and \eqref{6(vi)}};
\end{align*}
\item for any $l\notin [k,j[$, 
\[
q_{l}(T_{m_l}x)\le \sum_{l'=k}^{j-1}q_{l}(T_{m_{l}}x_{i,l'})
 <\sum_{l'=k}^{j-1} \frac{1}{2^{l+l'+2}}< \frac{1}{2^{k+l+1}} \quad \text{by \eqref{6(vi)}}.
\]
\end{enumerate}
\end{proof}

Let $(p_n)$ be an increasing sequence of norms inducing the topology of $X$ and $(q_n)$ an increasing sequence of seminorms inducing the topology of $Y$. Let $(y_i)_{i\ge 1}$ be a dense sequence in $Y$ and $(m_k)$ the subsequence given by Lemma~\ref{6lemher 3}. By hypothesis, there then exists an infinite-dimensional closed subspace $M_0\subset X$ and an increasing sequence $(k(n))_n$ such that for any continuous seminorm $q$ on $Y$, we have
\[\lim_{n\to \infty} \inf_{k(n-1)\le k< k(n)} q(T_{m_k}x)=0 \quad \text{for any $x\in M_0$}.\]
Let $(u_n)$ be a basic sequence in $M_0$ given by Lemma~\ref{3lem basic} for $K=2$ such that $p_1(u_n)=1$ for any $n\ge 1$. Let $\phi$ be an injective function from $\N\times\N$ to $\N$ such that for any $n,j\ge 1$, we have $\phi(n,j)\ge n+j+2$ and such that for any $n\ge 1$, $(\phi(n,j))_j$ is increasing. We remark that for any $(n,j)\ne (m,i)$, the intervals $[k(\phi(n,j)),k(\phi(n,j)+1)[$ and $[k(\phi(m,i)),k(\phi(m,i)+1)[$ are disjoint. Thanks to the properties of the sequence $(m_k)$, we can then select, for any $n,j\ge 1$, a vector $x_{n,j}\in X$ such that
\begin{enumerate}
\item $p_{k(\phi(n,j))}(x_{n,j})< \frac{1}{2^{k(\phi(n,j))}}$;
\item for any $l\in [k(\phi(n,j)),k(\phi(n,j)+1)[$, \[q_{k(\phi(n,j))}(T_{m_l}x_{n,j}-y_{j})<\frac{1}{2^{k(\phi(n,j))+1}};\]
\item for any $l\notin [k(\phi(n,j)),k(\phi(n,j)+1)[$, \[q_{l}(T_{m_l}x_{n,j})<\frac{1}{2^{k(\phi(n,j))+l+1}}.\]
\end{enumerate}
 
For any $n,j\ge 1$, since $\phi(n,j)\ge n+j+2$ and $k(\cdot)$ is increasing, we know that $k(\phi(n,j))\ge n+j+2$ and thus by definition of $x_{n,j}$ that
\[p_{n+j}(x_{n,j})<\frac{1}{2^{n+j+2}}.\]
We deduce that each element $f_n:=u_n +\sum_{j=1}^{\infty} x_{n,j}$ is well-defined. Moreover, the sequence $(f_n)$ is a basic sequence equivalent to $(u_n)$ because
\[
\sum_{n\ge 1}4p_n(f_n-u_n)\le \sum_{n\ge 1}\sum_{j\ge 1}4p_n(x_{n,j})< \sum_{n\ge 1}\sum_{j\ge 1} 4\frac{1}{2^{n+j+2}}=1  \quad (\text{Lemma~\ref{3lem equiv}}).\]
We let $M_f=\overline{\Span}\{f_n:n\ge 1\}$ and we show that $M_f$ is a hypercyclic subspace along $(m_k)$ and thus a hypercyclic subspace along $(n_k)$. 
%This will conclude the proof since the construction of $(m_k)$ from $(n_k)$ can be repeated from each subsequence of $(n_k)$. 

We first show that for any $i,n\ge 1$, any $k(\phi(n,i))\le l< k(\phi(n,i)+1)$
\begin{equation}
q_i(T_{m_l}(f_n-u_n)-y_i)\le \frac{1}{2^i}.\label{6hered1}
\end{equation}
Indeed, for any $i,n\ge 1$, any $k(\phi(n,i))\le l< k(\phi(n,i)+1)$, we have
\begin{align*}
q_i(T_{m_l}(f_n-u_n)-y_i)&\le \sum_{j\ne i}q_{k(\phi(n,i))}(T_{m_l}x_{n,j})
+ q_{k(\phi(n,i))}(T_{m_l}x_{n,i}-y_i)\\
&\le \sum_{j\ne i} q_{l}(T_{m_l}x_{n,j})
+ q_{k(\phi(n,i))}(T_{m_l}x_{n,i}-y_i)\\
&< \sum_{j\ne i} \frac{1}{2^{k(\phi(n,j))+l+1}} + \frac{1}{2^{k(\phi(n,i))+1}}\\
&\le \frac{1}{2^{l+n}}+ \frac{1}{2^{n+i+1}} \le \frac{1}{2^{i}}.
\end{align*}
We also remark that for any $i\ge 1$, for any $n\ne m$, for any $k(\phi(n,i))\le l< k(\phi(n,i)+1)$ 
\begin{equation}
q_i(T_{m_l}(f_m-u_m))< \frac{1}{2^{i+m}}
\label{6hered2bis}
\end{equation}
because
\begin{align*}
q_i(T_{m_l}(f_m-u_m))&\le \sum_{j\ge 1} q_{k(\phi(n,i))}(T_{m_l}x_{m,j})\\
&\le \sum_{j\ge 1} q_{l}(T_{m_l}x_{m,j}) <  \sum_{j\ge 1} \frac{1}{2^{k(\phi(m,j))+l+1}}\\
&\le \frac{1}{2^{l+m}}\le \frac{1}{2^{i+m}}.
\end{align*} 

Let $x=\sum_{m\ge1}\alpha_m f_m\in M_f\backslash\{0\}$. There exists $n\ge 1$ such that $\alpha_n\ne 0$ and without loss of generality, we can suppose that $\alpha_n=1$. Moreover, we know that the sequence $(\alpha_m)_m$ is bounded by some constant $K$ (Remark~\ref{rembdd}). Therefore, for any $k\le i$, any $k(\phi(n,i))\le l< k(\phi(n,i)+1)$,
\begin{align*}
&q_k\Big(T_{m_l}\Big(\sum_{m\ge 1} \alpha_m f_m\Big)-y_i\Big)\\
&\quad\le q_k\Big(T_{m_l}\Big(\sum_{m\ge 1} \alpha_m u_m\Big)\Big)
+ q_k(T_{m_l}(f_{n}-u_{n})-y_i)
+ \sum_{m\ne n} K q_k(T_{m_l}(f_m-u_m))\\
&\quad\le  q_k\Big(T_{m_l}\Big(\sum_m \alpha_m u_m\Big)\Big)
+ q_i(T_{m_l}(f_{n}-u_{n})-y_i)
+ \sum_{m\ne n} K q_i(T_{m_l}(f_m-u_m))\\
&\quad\le q_k\Big(T_{m_l}\Big(\sum_m \alpha_m u_m\Big)\Big)+ \frac{1}{2^i}+ \sum_{m\ne n} \frac{K}{2^{i+m}}\quad \text{by \eqref{6hered1} and \eqref{6hered2bis}}\\
&\quad\le q_k\Big(T_{m_l}\Big(\sum_m \alpha_m u_m\Big)\Big)+ \frac{1}{2^i}+ \frac{K}{2^{i}}.
\end{align*}
Since $\sum_m \alpha_m u_m\in M_0$, we know that for any $k\ge 1$, there exists an increasing sequence $(l_i)_{i\ge 1}$ such that $k(\phi(n,i))\le l_i< k(\phi(n,i)+1)$ and $q_k\big(T_{m_{l_i}}\big(\sum_m \alpha_m u_m\big)\big)$ tends to $0$ as $i$ tends to infinity. Therefore, we deduce that $q_k\Big(T_{m_{l_i}}x-y_i\Big)$ converges to $0$ as $i$ tends to $\infty$ and thus that $x$ is hypercyclic along the sequence $(m_k)$.
\end{proof}

In the case of Fr\'{e}chet spaces without continuous norm, Theorem~\ref{6thmher} and its proof can easily be generalized to hypercyclic subspaces of type $1$. A hypercyclic subspace $M\subset X$ is said to be of type $1$ if
there exists a continuous seminorm $p$ on $X$ such that $M\cap \ker p=\{0\}$. This notion has been introduced in \cite{4Menet3}.
\begin{definition}
Let $X$, $Y$ be Fr\'{e}chet spaces, $(T_n)\subset L(X,Y)$ and $(n_k)_{k\ge 1}$ an increasing sequence of integers. The sequence $(T_n)$ possesses \emph{hereditarily hypercyclic subspaces of type} $1$ along $(n_k)$ if for any subsequence $(m_k)\subset(n_k)$, the sequence $(T_{m_k})_{k\ge 1}$ possesses a hypercyclic subspace of type~$1$.
\end{definition}

\begin{theorem}
Let $X$ be an infinite-dimensional Fr\'{e}chet space, $Y$ a separable Fr\'{e}chet space and $(T_n)\subset L(X,Y)$.
If $(T_n)$ satisfies the Hypercyclicity Criterion along $(n_k)$, then the following assertions are equivalent:
\begin{enumerate}[\upshape (1)]
\item the sequence $(T_n)$ possesses hereditarily hypercyclic subspaces of type $1$ along~$(n_k)$;
\item for any subsequence $(m_k)\subset (n_k)$, there exist an infinite-dimensional closed subspace $M\subset X$ and a continuous seminorm $p$ on $X$ such that 
\begin{itemize}
\item $M\cap \ker p$ is of infinite codimension in $M$,
\item for any continuous seminorm $q$ on $Y$, we have
\[\liminf_{k\to \infty} q(T_{m_k}x)<\infty \quad\text{for any }x\in M;\]
\end{itemize}
\item for any subsequence $(m_k)\subset (n_k)$, there exist an infinite-dimensional closed subspace $M_0\subset X$, a continuous seminorm $p$ on $X$ and an increasing sequence of integers $k(n)$ such that 
\begin{itemize}
\item $M_0\cap \ker p$ is of infinite codimension in $M_0$,
\item for any continuous seminorm $q$ on $Y$, we have
\[\lim_{n\to\infty} \inf_{k(n-1)\le k< k(n)} q(T_{m_k}x)=0 \quad \text{for any }x\in M_0.\]
\end{itemize}
\end{enumerate}
\end{theorem}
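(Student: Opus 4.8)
The plan is to follow the proof of Theorem~\ref{6thmher} line by line, replacing the continuous norm used there by a continuous seminorm $p$ that witnesses the type~$1$ condition, and threading the requirement that $M\cap\ker p$ (respectively $M_0\cap\ker p$) be of infinite codimension through every step. As before, I would introduce an auxiliary assertion $(2')$: the analogue of the one in Theorem~\ref{6thmher}, now asserting that for any subsequence $(m_k)$ there exist $M$ and a continuous seminorm $p$ with $M\cap\ker p$ of infinite codimension in $M$, such that for every continuous seminorm $q$ on $Y$ there is a continuous seminorm $p'\ge p$ for which the local domination $q(T_{m_j}x)\le p'(x)$ holds on finite-codimensional subspaces of $M$ in the same quantified sense as in $(2')$. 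One then establishes the cycle $\neg(2')\Rightarrow\neg(2)\Rightarrow\neg(1)$ and $(2')\Rightarrow(3)\Rightarrow(1)$.

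The implication $\neg(2)\Rightarrow\neg(1)$ is immediate: a hypercyclic subspace $M$ of type~$1$ comes by definition with a seminorm $p$ for which $M\cap\ker p=\{0\}$, hence $M\cap\ker p$ has infinite codimension in $M$, while hypercyclicity of every nonzero vector of $M$ forces $\liminf_k q(T_{m_k}x)<\infty$ for every $q$; such an $M$ realizes~(2). The implications $\neg(2')\Rightarrow\neg(2)$ and $(2')\Rightarrow(3)$ follow from the type~$1$ versions of Lemmas~\ref{lemher1} and~\ref{lemher2}, and here the changes are pure bookkeeping: the increasing sequence of \emph{norms} inducing the topology is replaced by an increasing sequence of seminorms with $p_1=p$, and the selection of the basic sequence $(u_n)$ is performed in directions transverse to $\ker p$. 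The infinite-codimension hypothesis on $M\cap\ker p$ is exactly what lets Lemma~\ref{3lem karl} be applied repeatedly to produce vectors $u_n$ with $p(u_n)=1$ spanning an infinite-dimensional subspace meeting $\ker p$ trivially, so that $p$ is a genuine norm on $\overline{\Span}\{u_n\}$ and Remark~\ref{rembdd} still delivers boundedness of the coefficients. Lemma~\ref{6lemher 3} itself carries over unchanged, since its proof only uses seminorm estimates and the Hypercyclicity Criterion and never requires the $p_n$ to be norms.

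The substantive part is $(3)\Rightarrow(1)$, where the hypercyclic subspace $M_f$ is built. I would run the construction of Theorem~\ref{6thmher} verbatim for the maps $S_{m_k}$, the injection $\phi$ and the vectors $x_{n,j}$, but normalize the basic sequence $(u_n)\subset M_0$ by $p(u_n)=1$ using the seminorm $p$ supplied by~(3). Setting $f_n:=u_n+\sum_j x_{n,j}$ and applying Lemma~\ref{3lem equiv} with $p\le p_n$, the bound $\sum_n 2Kp_n(f_n-u_n)<1$ keeps $(f_n)$ equivalent to $(u_n)$ and forces $\sum_n 2Kp(f_n-u_n)<1$. The main obstacle is to check that this small $p$-perturbation does not create a nonzero element of $M_f\cap\ker p$, i.e.\ that $M_f:=\overline{\Span}\{f_n\}$ is still of type~$1$. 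This is where the infinite-codimension hypothesis pays off once more: for $x=\sum_m\alpha_mf_m$ with not all $\alpha_m=0$ one has $\sum_m\alpha_mu_m\ne 0$, so $p(\sum_m\alpha_mu_m)>0$, and since $|\alpha_m|\le 2Kp(\sum_j\alpha_ju_j)$ by Remark~\ref{rembdd}, the perturbation estimate $\sum_m|\alpha_m|p(f_m-u_m)\le 2Kp(\sum_j\alpha_ju_j)\sum_m p(f_m-u_m)<p(\sum_j\alpha_ju_j)$ yields $p(x)\ge p(\sum_m\alpha_mu_m)-\sum_m|\alpha_m|p(f_m-u_m)>0$. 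Hence $M_f\cap\ker p=\{0\}$ and $M_f$ is of type~$1$. The verification that every nonzero vector of $M_f$ is hypercyclic along $(m_k)$, and hence along $(n_k)$, is then word for word the estimate at the end of the proof of Theorem~\ref{6thmher}.
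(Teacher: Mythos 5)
Your proposal is correct and takes exactly the route the paper intends: the paper offers no separate proof of this statement, saying only that Theorem~\ref{6thmher} ``and its proof can easily be generalized to hypercyclic subspaces of type~$1$'', and your write-up carries out precisely that generalization. The two points you isolate as the genuine additions --- using the infinite codimension of $M_0\cap \ker p$ (via Lemma~\ref{3lem karl}) to keep selecting vectors $u_n$ with $p(u_n)=1$ so that $p$ acts as a norm on $\overline{\Span}\{u_n:n\ge 1\}$ and Remark~\ref{rembdd} still applies, and the perturbation estimate showing $M_f\cap\ker p=\{0\}$ so that the constructed subspace is again of type~$1$ --- are exactly the modifications the paper's one-sentence proof leaves to the reader.
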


\section{Characterization of the existence of hypercyclic subspaces for operators satisfying the Frequent Hypercyclicity Criterion}\label{6FHC}

By using the ideas of Theorem~\ref{6thmher}, we can obtain a characterization of operators~$T$ with a hypercyclic subspace under the assumption that $T$ satisfies the Frequent Hypercyclicity Criterion.

\begin{definition}[Frequent Hypercyclicity Criterion, \cite{4Bonilla}]
Let $X$ be a Fr\'{e}chet space, $Y$ a separable Fr\'{e}chet space and $(T_n)_{n\ge 0}\subset L(X,Y)$. We say that $(T_n)$ satisfies the Frequent Hypercyclicity Criterion if there are a dense subset $Y_0$ of $Y$ and maps $S_n:Y_0\rightarrow X$ such that, for each $y\in Y_0$,
\begin{enumerate}[\upshape 1.]
\item $\sum_{n=0}^{k}T_{k}S_{k-n}y$ converges unconditionally in $Y$, uniformly in $k\in \Z_+$,
\item $\sum_{n=0}^{\infty}T_{k}S_{k+n}y$ converges unconditionally in $Y$, uniformly in $k\in \Z_+$,
\item $\sum_{n=0}^{\infty}S_ny$ converges unconditionally in $X$,
\item $T_nS_ny\rightarrow y$.
\end{enumerate}
\end{definition}
\begin{remark}
A collection of series $\sum_{n=0}^{\infty}x_{n,k}$, $k\in I$, is said to be unconditionally convergent uniformly in $k\in I$ if for any $\varepsilon>0$, there exists $N\ge 0$ such that for any $k\in I$, any finite set $F\subset [N,\infty[$,
\[\Big\|\sum_{n\in F}x_{k,n}\Big\|<\varepsilon.\]
\end{remark}
\begin{remark}
If $T\in L(X)$ satisfies the Frequent Hypercyclicity Criterion, then $T$ satisfies the Hypercyclicity Criterion along the whole sequence $(n)$.
\end{remark}

The characterization that we obtain can be stated as follows.

%\begin{remark}
%Obviously, if $(T_n)$ satisfies the Frequent Hypercyclicity Criterion, then $(T_n)$ satisfies the Hypercyclicity Criterion along $(n)$.
%\end{remark}

\begin{theorem}\label{6thmherfhc}
Let $X$ be an infinite-dimensional separable Fr\'{e}chet space with a continuous norm and $T\in L(X)$.
If $T$ satisfies the Frequent Hypercyclicity Criterion, then the following assertions are equivalent:
\begin{enumerate}[\upshape (1)]
\item the operator $T$ possesses a hypercyclic subspace;
\item there exists an infinite-dimensional closed subspace $M\subset X$ such that for any continuous seminorm $p$ on $X$, we have
\[\liminf_{k\to \infty} p(T^kx)<\infty \quad\text{for any }x\in M;\]
\item there exist an infinite-dimensional closed subspace $M_0\subset X$ and an increasing sequence of integers $k(n)$ such that for any continuous seminorm $p$ on $X$, we have
\[\lim_{n \to \infty} \inf_{k(n-1)\le k< k(n)} p(T^kx)=0 \quad \text{for any }x\in M_0.\]
\end{enumerate}
\end{theorem}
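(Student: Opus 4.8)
The plan is to follow the architecture of the proof of Theorem~\ref{6thmher}, specialized to a single operator (so that $Y=X$ and $T_n=T^n$), while replacing its hereditary input by the extra strength supplied by the Frequent Hypercyclicity Criterion. Since $T$ satisfies that criterion, it satisfies the Hypercyclicity Criterion along the full sequence $(n)$, so every lemma of Section~\ref{6herec} is available with $(n_k)=(n)$. I would establish the cycle $(1)\Rightarrow(2)\Rightarrow(3)\Rightarrow(1)$, introducing, exactly as in Theorem~\ref{6thmher}, the single-subspace form of assertion $(2')$ as an intermediate step.

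The implication $(1)\Rightarrow(2)$ is immediate: if $M$ is a hypercyclic subspace then every non-zero $x\in M$ has dense orbit, so that orbit meets every seminorm-neighbourhood of $0$; hence $\liminf_k p(T^kx)=0$ for each continuous seminorm $p$, which also holds trivially at $x=0$. For $(2)\Rightarrow(3)$ I would reproduce the corresponding step of Theorem~\ref{6thmher}: obtain $(2)\Rightarrow(2')$ as the contrapositive of Lemma~\ref{lemher1} (applied to the subspace $M$ furnished by $(2)$, whose conclusion $q(T^{n_k}x)\to\infty$ would contradict $(2)$), and then deduce $(2')\Rightarrow(3)$ from Lemma~\ref{lemher2}; both lemmas apply because the Hypercyclicity Criterion holds along $(n)$.

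The whole difficulty is concentrated in $(3)\Rightarrow(1)$, and this is where the Frequent Hypercyclicity Criterion, rather than mere weak mixing, is needed. I would imitate the construction of $(3)\Rightarrow(1)$ in Theorem~\ref{6thmher}: take a basic sequence $(u_n)$ in the subspace $M_0$ given by $(3)$, perturb it to $f_n=u_n+\sum_j x_{n,j}$ via Lemmas~\ref{3lem basic} and~\ref{3lem equiv}, and show that each non-zero vector of $M_f=\overline{\Span}\{f_n\}$ is hypercyclic by locating, in each block, a time where $T^l\big(\sum_m\alpha_mu_m\big)$ is small (guaranteed by $(3)$) at which $f_n-u_n$ already approximates the target $y_i$. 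Thus the vectors $x_{n,j}$ must approximate $y_j$ \emph{throughout} a whole block of times while remaining negligible outside it. Here the device of Lemma~\ref{6lemher 3} — summing single-time approximations $x_{i,l}\approx S_{m_l}y_i$ over a block — cannot be transcribed directly, because a single operator cannot approximate a fixed target at two consecutive times (for $T=2B$ on $\ell^2$, $T^lx\approx e_0$ forces $x_l\approx 2^{-l}$ and $x_{l+1}\approx 0$, which is incompatible with $T^{l+1}x\approx e_0$). I would therefore work along a sufficiently sparse subsequence $(m_k)\subset(n)$ — legitimate, since hypercyclicity along $(m_k)$ implies hypercyclicity of $T$ — and build the $x_{n,j}$ from the maps of the Frequent Hypercyclicity Criterion, whose uniform-in-$k$ unconditional convergence (conditions 1 and 2) bounds the cross-terms $T^{m_l}S^{m_{l'}}y_j$ uniformly once the gaps $|m_l-m_{l'}|$ are large — precisely what a single application of the Hypercyclicity Criterion cannot deliver at consecutive times.

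The main obstacle, then, is to have the block structure of $(3)$ available along such a sparse subsequence: unlike in Theorem~\ref{6thmher}, assertion $(3)$ is here \emph{not} assumed hereditarily, so the good times it supplies in each block — which depend on the vector $\sum_m\alpha_mu_m$ — must be reconciled with the sparse set of times at which the Frequent Hypercyclicity Criterion can approximate the targets $y_j$. I expect this reconciliation to be the crux, and would carry it out by interleaving the recursive construction of $(u_n)$ and of the subsequence, using $(3)$ together with Lemma~\ref{3lem karl} to place each good-time window on sparse, mutually well-separated index-blocks of $(m_k)$, so that the cross-term estimates of the criterion and the block estimate of $(3)$ hold simultaneously. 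Once this is in place, the remaining verifications — that $(f_n)$ is basic and equivalent to $(u_n)$, and the final estimate showing $q_k\big(T^{m_{l_i}}x-y_i\big)\to0$ — are routine adaptations of the end of the proof of Theorem~\ref{6thmher}.
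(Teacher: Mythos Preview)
Your outline of $(1)\Rightarrow(2)\Rightarrow(2')\Rightarrow(3)$ via Lemmas~\ref{lemher1} and~\ref{lemher2} matches the paper exactly, and you have correctly located the heart of $(3)\Rightarrow(1)$: the good time in each block $[k(n-1),k(n))$ supplied by $(3)$ depends on the vector $\sum_m\alpha_m u_m$ and cannot be forced onto a prearranged sparse subsequence $(m_k)$.

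However, your proposed resolution --- interleaving the construction of $(u_n)$ with that of $(m_k)$ so as to ``place each good-time window on sparse, mutually well-separated index-blocks'' --- does not work and is not what the paper does. The location of the good time in a block depends on the full vector $\sum_{m}\alpha_m u_m$ with \emph{arbitrary} coefficients, so no choice of $(u_n)$ (and Lemma~\ref{3lem karl} gives no such control) can pin it down in advance. The paper's device is different and is precisely the missing idea: it introduces a third parameter $N$ (so $\psi:\N\to\N\times\N\times\N$, $\psi(l)=(n,j,N)$) and, inside each selected block $[k(n_l),k(n_l+1)+N)$, plants the perturbation $x_l=\sum_{i=0}^{K} S_{k(n_l)+iN}\,y_j$ along the \emph{entire arithmetic grid of step $N$} in that block. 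Conditions~1 and~2 of the Frequent Hypercyclicity Criterion then bound the cross-terms at every grid point by a quantity $\delta(j,N)$ with $\delta(j,N)\to0$ as $N\to\infty$. At the end, given \emph{whatever} good time $k_l\in[k(n_l),k(n_l+1))$ assertion $(3)$ produces for $\sum_m\alpha_m u_m$, one rounds it up to the nearest grid point $k(n_l)+i_lN$ and uses the continuity estimate $p_j(T^kx)\le C\,p_{m_{j,N}}(x)$ for $0\le k\le N$ to transfer the smallness of $T^{k_l}\big(\sum_m\alpha_m u_m\big)$ across at most $N$ iterations to the grid point. This combination --- the extra index $N$, the full arithmetic grid inside each block, and the rounding-by-continuity step --- is what reconciles the unpredictable good time of $(3)$ with the sparseness required by the Frequent Hypercyclicity Criterion, and it is absent from your plan.
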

\begin{proof}
As for the proof of Theorem~\ref{6thmher}, we consider the following intermediate assertion:
\begin{enumerate}[\upshape (2')]
\item \emph{there exists an infinite-dimensional closed subspace $M\subset X$ such that for any continuous seminorm $q$ on $X$, there exists a continuous norm $p$ on $X$ such that for any $k_0\ge 0$, there exist $k\ge k_0$ and a closed subspace $E$ of finite codimension in $M$ such that for any $x\in E$, there exists $k_0\le j\le k$ satisfying} \[q(T^jx)\le p(x).\]
\end{enumerate}

We know that $\neg (2)\Rightarrow \neg(1)$ and in view of Lemma~\ref{lemher1} and Lemma~\ref{lemher2}, we deduce that
$\neg(2')\Rightarrow \neg(2)$ and $(2')\Rightarrow (3)$. It remains to prove that $(3)\Rightarrow (1)$.

$(3)\Rightarrow (1)$. Let $(p_n)$ be an increasing sequence of norms inducing the topology of $X$. Let $Y_0\subset X$ be the dense subset and $S_n$ the maps given by the Frequent Hypercyclicity Criterion. Let $(y_j)_{j\ge 1}$ be a dense sequence in $Y_0$. 

By hypothesis, there exist an infinite-dimensional closed subspace $M_0$ and an increasing sequence $(k(n))_n$ such that for any continuous seminorm $p$ on $X$, we have
\[\lim_{n\to \infty} \inf_{k(n-1)\le k< k(n)} p(T^kx)=0 \quad \text{for any $x\in M_0$}.\]
Let $(u_n)$ be a basic sequence in $M_0$ given by Lemma~\ref{3lem basic} for $K=2$ such that $p_1(u_n)=1$ for any $n\ge 1$. Let $\psi$ be a function from $\N$ to $\N\times\N\times \N$ such that for any $n,j,N\ge 1$, $\#\{l\in\N:\psi(l)=(n,j,N)\}=\infty$. We construct an increasing sequence $(n_l)_{l\ge 1}$ such that for any $l\ge 1$, if $\psi(l)=(n,j,N)$, then 
\begin{enumerate}[\upshape (i)]
\item $k(n_l+1)+N\le k(n_{l+1})$,
\item for any $k\ge k(n_l)$, $p_{l+j}(T^kS_ky_j-y_j)<\frac{1}{2^{l+j+1}}$
\end{enumerate}
and the vector
$x_{l}:=\sum_{i=0}^{K}S_{k(n_l)+iN}y_j$, where \[K=\max\{i\ge 0:k(n_l)+iN\in[k(n_l),k(n_l+1)+N[\},\] satisfies
\begin{enumerate}[\upshape (i)]
\setcounter{enumi}{2}
\item $p_{l+n}(x_l)<\frac{1}{2^{l+2}}$,
\item for any $l'\ne l$, if $\psi(l')=(n',j',N')$, then for any $k\in [k(n_{l'}),k(n_{l'}+1)+N'[$, 
\[p_{l+j'}(T^kx_l)<\frac{1}{2^{l+l'+n'+j'}}.\]
\end{enumerate}
If $\psi(1)=(n,j,N)$, we start by choosing $n_1$ sufficiently large so that $p_{1+n}(x_1)<1/2^{3}$ and for any $k\ge k(n_1)$, $p_{1+j}(T^kS_ky_j-y_j)<\frac{1}{2^{2+j}}$. This is possible because $\sum_{n=0}^{\infty}S_ny_j$ converges unconditionally in $X$ and $T^kS_ky_j$ tends to $y_j$ as $k$ tends to infinity. If we now suppose that $n_1,\dots,n_l$ are fixed, we can choose $n_{l+1}$ sufficiently large so that (i), (ii) and (iii) are satisfied and so that for any $l'\le l$, if
 $\psi(l')=(n',j',N')$, then for any $k\in [k(n_{l'}),k(n_{l'}+1)+N'[$, 
\begin{equation}
p_{l+1+j'}(T^kx_{l+1})<\frac{1}{2^{l+1+l'+n'+j'}}
\label{6xfhcpet}
\end{equation}
and if $\psi(l+1)=(n,j,N)$, then for any $k\in [k(n_{l+1}),k(n_{l+1}+1)+N[$, 
\begin{equation}
p_{l'+j}(T^kx_{l'})<\frac{1}{2^{l'+l+1+n+j}}.
\label{6fhxcond1}
\end{equation}
Indeed, if $n_{l+1}$ is sufficiently large, we know that (ii) is satisfied because $T^kS_ky_j$ tends to $y_j$ as $k$ tends to infinity, and that inequalities (iii) and \eqref{6xfhcpet} are satisfied because the series $\sum_{n=0}^{\infty}S_ny_j$ converges unconditionally and thus $x_l$ is as small as desired. Finally, \eqref{6fhxcond1} is satisfied if $n_{l+1}$ is sufficiently large because for any $k\ge 0$, any $j'\ge 1$,
the series $\sum_{n=0}^{k}T^{k}S_{k-n}y_{j'}$ converges unconditionally in $X$. Indeed, for any $l'\le l$, any $k\ge k(n_{l+1})$, if $\psi(l')=(n',j',N')$, we have
\[T^kx_{l'}=\sum_{i\in F}T^kS_{k-i}y_{j'}\quad\text{with}\quad F\subset [k(n_{l+1})-k(n_{l'})-K'N',\infty[.\]

%\[p_{l'+j}(T^kx_{l'})=p_{l'+j}\Big(\sum_{i=0}^{K'}T^kS_{k(n_{l'})+iN'}y_{j'}\Big)\le p_{l'+j}\Big(\sum_{s=k(n_{l+1})-k(n_{l'})-K'N'}^{k}T^kS_{k-s}y_{j'}\Big)\]
%
% because we have for any $k\in [k(n_{l+1}),k(n_{l+1}+1)+N[$, any $l'\le l$,  
%\[p_{l'+j}(T^kx_{l'})=p_{l'+j}\big(\sum_{i=0}^{K'}T^kS_{k(n_{l'})+iN'}y_j\big)\le p_{l'+j}\big(\sum_{s=k(n_{l+1})-k(n_{l'})-K'N'}^{k}T^kS_{k-s}y_j\big)\]

%Let $\phi$ be an injective function from $\Z_+\times\Z_+$ to $\Z_+$ such that for any $n,j\ge 1$, we have $\phi(n,j)\ge n+j+3$ and such that for any $n\ge 1$, $(\phi(n,j))_j$ is increasing. We remark that for any $(n,j)\ne (m,i)$, the intervals $[k(\phi(n,j)),k(\phi(n,j)+1)[$ and $[k(\phi(n,j)),k(\phi(n,j)+1)[$ are disjoints. For any $n,j\ge 1$, we then select a vector $x_{n,j}\in X$ such that
%\begin{enumerate}
%\item $p_{k(\phi(n,j))}(x_{n,j})< \frac{1}{2^{k(\phi(n,j))}}$;
%\item for any $l\in [k(\phi(n,j)),k(\phi(n,j)+1)[$, \[q_{k(\phi(n,j))}(T_{m_l}x_{n,j}-y_{j})<\frac{1}{2^{k(\phi(n,j))+1}};\]
%\item for any $l\notin [k(\phi(n,j)),k(\phi(n,j)+1)[$, \[q_{l}(T_{m_l}x_{n,j})<\frac{1}{2^{k(\phi(n,j))+l+1}}\]
%\end{enumerate}
%Such a vector exist thanks to the properties of the sequence $(m_k)$.
 
Let $A_n=\{l\ge 1: \psi(l)=(n,*,*)\}$. We let $f_n:=u_n +\sum_{l\in A_n}x_{l}$, which clearly converges thanks to (iii). Moreover, the sequence $(f_n)$ is a basic sequence equivalent to $(u_n)$ because
\[
\sum_{n=1}^{\infty}4p_n(f_n-u_n)\le \sum_{n=1}^{\infty}\sum_{l\in A_n}4p_n(x_{l})< \sum_{n=1}^{\infty}\sum_{l\in A_n} \frac{4}{2^{l+2}}= \sum_{l=1}^{\infty} \frac{4}{2^{l+2}}=1  \quad (\text{Lemma~\ref{3lem equiv}}).\]
We let $M_f=\overline{\Span}\{f_n:n\ge 1\}$ and we show that $M_f$ is a hypercyclic subspace.

We first deduce from (ii) that if $\psi(l)=(n,j,N)$ and thus $x_{l}=\sum_{i=0}^{K}S_{k(n_l)+iN}y_j$, then
for any $i\ge 0$, if $k(n_l)+iN\in[k(n_l),k(n_l+1)+N[$, we have
\begin{align*}
&p_{j}(T^{k(n_l)+iN}x_l-y_j)\\
&\quad\le p_j(T^{k(n_l)+iN}S_{k(n_l)+iN}y_j-y_j)+
p_j\Big(\sum_{0\le i'\le K, i'\ne i}T^{k(n_l)+iN}S_{k(n_l)+i'N}y_j\Big)\\
&\quad\le \frac{1}{2^{l+j+1}}+ \delta(j,N)
%\label{6ineq1}
\end{align*}
where $\delta(j,N)=\sup\{p_j(\sum_{i'\in F, i'\ne i}T^{k+iN}S_{k+i'N}y_j): k\ge 0,\ F\subset \Z_+\ \text{finite}\}$ and thanks to Conditions~1. and 2. of the Frequent Hypercyclicity Criterion, we know that for any $j\ge 1$,
\begin{equation}
\delta(j,N)\xrightarrow[N\rightarrow \infty]{} 0.
\label{6lim0}
\end{equation}

Therefore, for any $n,j,N\ge 1$, if $\psi(l)=(n,j,N)$ and $k(n_l)+iN\in[k(n_l),k(n_l+1)+N[$, we have
\begin{equation}
p_j(T^{k(n_l)+iN}(f_n-u_n)-y_j)\le \frac{1}{2^{l+j}}+ \delta(j,N)\label{6hered1fhc}
\end{equation}
because 
\begin{align*}
p_j(T^{k(n_l)+iN}(f_n-u_n)-y_j)&\le \sum_{l'\in A_n\backslash\{l\}}p_{l'+j}(T^{k(n_l)+iN}x_{l'})
+ p_{j}(T^{k(n_l)+iN}x_l-y_j)\\
&\le \sum_{l'\ne l} \frac{1}{2^{l'+l+n+j}} + \frac{1}{2^{l+j+1}}+ \delta(j,N) \quad \text{by (iv)}\\
&\le \frac{1}{2^{l+j}}+ \delta(j,N).
\end{align*}
We also remark that for any $l\ge 1$ with $\psi(l)=(n,j,N)$, for any $m\ne n$, for any $k(n_l)\le k< k(n_l+1)+N$, we have
\begin{equation}
p_j(T^k(f_m-u_m))< \frac{1}{2^{l+j+m}}\label{6hered2}
\end{equation}
because
\begin{align*}
p_j(T^k(f_m-u_m))&\le \sum_{l'\in A_m} p_{j}(T^k x_{l'})\\
&\le \sum_{l'\ne l} p_{l'+j}(T^k x_{l'}) < \sum_{l'=1}^{\infty} \frac{1}{2^{l+l'+j+m}} \quad\text{by (iv)}\\
&\le \frac{1}{2^{l+j+m}}.
\end{align*} 

Let $x=\sum_{m\ge1}\alpha_m f_m\in M_f\backslash\{0\}$. There exists $n\ge 1$ such that $\alpha_n\ne 0$ and without loss of generality, we can suppose that $\alpha_n=1$. Moreover, we know that the sequence $(\alpha_m)_m$ is bounded by some constant $K$ (Remark~\ref{rembdd}). Therefore, for any $j,N\ge 1$, if $\psi(l)=(n,j,N)$ and if $k(n_l)+iN\in[k(n_l),k(n_l+1)+N[$, we have
\begin{align*}
&p_j\Big(T^{k(n_l)+iN}\Big(\sum_m \alpha_m f_m\Big)-y_j\Big)\\
%&\quad\le p_j\Big(T^{k(n_l)+iN}\Big(\sum_m \alpha_m u_m\Big)\Big)
%+ p_j(T^{k(n_l)+iN}(f_{n}-u_{n})-y_j)\\
%&\hspace*{5.05cm}+ \sum_{m\ne n} K p_j(T^{k(n_l)+iN}(f_m-u_m))\\
&\quad\le  p_j\Big(T^{k(n_l)+iN}\Big(\sum_m \alpha_m u_m\Big)\Big)
+ p_j(T^{k(n_l)+iN}(f_{n}-u_{n})-y_j)\\
&\hspace*{5.05cm}+ \sum_{m\ne n} K p_j(T^{k(n_l)+iN}(f_m-u_m))\\
&\quad\le p_j\Big(T^{k(n_l)+iN}\Big(\sum_m \alpha_m u_m\Big)\Big)+ \frac{1}{2^{l+j}}+ \delta(j,N)+ \sum_{m\ne n} \frac{K}{2^{l+j+m}}
\quad\text{by \eqref{6hered1fhc} and \eqref{6hered2}}\\
&\quad\le p_j\Big(T^{k(n_l)+iN}\Big(\sum_m \alpha_m u_m\Big)\Big)+ \frac{1}{2^{l+j}}+ \delta(j,N)+ \frac{K}{2^{l+j}}.
\end{align*}

By continuity of $T$, for any $j,N\ge 1$, there exists $m_{j,N}\ge 1$ and $C>0$ such that for any $k\le N$
\[p_j(T^kx)\le Cp_{m_{j,N}}(x) \quad \text{for any\ }x\in X.\]
Since $\sum_m \alpha_m u_m\in M_0$, we know that for any $j,N\ge 1$, there exists an increasing sequence $(k_l)_{l\ge 1}$ such that $k_l\in[k(n_l),k(n_l+1)[$ and $p_{m_{j,N}}\big(T^{k_l}\big(\sum_m \alpha_m u_m\big)\big)$ tends to $0$ as $l$ tends to infinity. Let $i_l\ge 1$ such that $k_l\in[k(n_l)+(i_l-1)N,k(n_l)+i_lN[$. We have $k(n_l)+i_lN\in[k(n_l),k(n_l+1)+N[$ and thus for any $l\ge 1$, if $\psi(l)=(n,j,N)$,
\begin{align*}
&p_j\Big(T^{k(n_l)+i_lN}\Big(\sum_m \alpha_m f_m\Big)-y_j\Big)\\
&\quad \quad \le p_j\Big(T^{k(n_l)+i_lN}\Big(\sum_m \alpha_m u_m\Big)\Big)+ \frac{K+1}{2^{l+j}}+ \delta(j,N)\\
&\quad \quad \le C p_{m_{j,N}}\Big(T^{k_l}\Big(\sum_m \alpha_m u_m\Big)\Big) + \frac{K+1}{2^{l+j}}+\delta(j,N)\xrightarrow[l\rightarrow \infty]{} \delta(j,N).
\end{align*}
We deduce that for any $x\in M_f\backslash\{0\}$, any $\varepsilon>0$, any $j,N\ge 1$, there exist $l\ge 1$ and $i\ge 1$ such that
\[p_j\Big(T^{k(n_l)+iN}x-y_j\Big)\le \delta(j,N)+\varepsilon.\]
In view of \eqref{6lim0}, we therefore conclude that $x$ is hypercyclic.
\end{proof}

We can generalize this result to hypercyclic subspaces of type~$1$ as follows.
\begin{theorem}
Let $X$ be an infinite-dimensional separable Fr\'{e}chet space and $T\in L(X)$.
If $T$ satisfies the Frequent Hypercyclicity Criterion, then the following assertions are equivalent:
\begin{enumerate}[\upshape (1)]
\item the operator $T$ possesses a hypercyclic subspace of type~$1$;
\item there exist an infinite-dimensional closed subspace $M\subset X$ and a continuous seminorm $p$ on $X$
such that 
\begin{itemize}
\item $M\cap \ker p$ is of infinite codimension in $M$,
\item for any continuous seminorm $q$ on $X$, we have
\[\liminf_{k\to \infty} q(T^kx)<\infty \quad\text{for any }x\in M;\]
\end{itemize}
\item there exist an infinite-dimensional closed subspace $M_0\subset X$, an increasing sequence of integers $k(n)$ and a continuous seminorm $p$ on $X$ such that 
\begin{itemize}
\item $M_0\cap \ker p$ is of infinite codimension in $M_0$,
\item for any continuous seminorm $q$ on $X$, we have
\[\lim_{n\to \infty} \inf_{k(n-1)\le k\le k(n)} q(T^kx)=0 \quad \text{for any }x\in M_0.\]
\end{itemize}
\end{enumerate}
\end{theorem}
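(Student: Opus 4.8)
The plan is to follow the proof of Theorem~\ref{6thmherfhc} step by step, the single new feature being that the continuous norm $p_1$ used there is systematically replaced by the continuous seminorm $p$ appearing in~(2) and~(3), while one keeps track of the requirement that $M\cap\ker p$ (resp.\ $M_0\cap\ker p$) be of infinite codimension. As before I would introduce the intermediate assertion
\begin{enumerate}[\upshape (2')]
\item \emph{there exist an infinite-dimensional closed subspace $M\subset X$ and a continuous seminorm $p_0$ on $X$ with $M\cap\ker p_0$ of infinite codimension in $M$, such that for any continuous seminorm $q$ on $X$ there exists a continuous seminorm $p\ge p_0$ such that for any $k_0\ge 0$, there exist $k\ge k_0$ and a closed subspace $E$ of finite codimension in $M$ with the property that for any $x\in E$ there is $k_0\le j\le k$ satisfying $q(T^jx)\le p(x)$,}
\end{enumerate}
and establish the cycle $(1)\Rightarrow(2)\Rightarrow(2')\Rightarrow(3)\Rightarrow(1)$.

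The implication $(1)\Rightarrow(2)$ is immediate: a hypercyclic subspace of type~$1$ carries a seminorm $p$ with $M\cap\ker p=\{0\}$ (of infinite codimension in $M$), and every nonzero $x\in M$ has dense orbit, so $\liminf_k q(T^kx)=0<\infty$ for each continuous seminorm $q$. For $(2)\Rightarrow(2')$ and $(2')\Rightarrow(3)$ I would invoke seminorm versions of Lemma~\ref{lemher1} and Lemma~\ref{lemher2}, taking $p_0=p$. Their proofs transcribe essentially unchanged once one observes that the infinite-codimension hypothesis is exactly what replaces the continuous norm: whenever the construction demands a vector $u\in M\cap E$, with $E$ of finite codimension and $p_0(u)=1$, the fact that $M\cap\ker p_0$ has infinite codimension in $M$ forces $M\cap E\not\subset\ker p_0$, so a $p_0$-normalized vector is always available through Lemma~\ref{3lem karl} applied to the seminorm $p_0$ (which holds for seminorms). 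Concretely, in Lemma~\ref{lemher1} one runs the construction of the $e_n$ against the seminorms $n^3\max(p_0,p_n)$, selecting each $e_n$ off $\ker p_0$; in Lemma~\ref{lemher2} one builds $(u_n)\subset M$ with $p_0(u_n)=1$, so that the output $M_0$ satisfies $M_0\cap\ker p_0=\{0\}$ and~(3) holds with $p=p_0$.

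For $(3)\Rightarrow(1)$ I would reproduce the Frequent Hypercyclicity construction, now taking the seminorm $p$ of~(3) as the first term of an increasing defining sequence; this is legitimate after replacing a defining sequence $(r_n)$ by $p_1:=p$ and $p_n:=\max(p,r_{n-1})$ for $n\ge 2$. I would build $(u_n)\subset M_0$ with $p(u_n)=1$, the infinite codimension of $M_0\cap\ker p$ again guaranteeing that each selection stays off $\ker p$, perturb $(u_n)$ to $(f_n)\subset Y_0$ exactly as before, set $M_f=\overline{\Span}\{f_n:n\ge1\}$, and copy the estimates \eqref{6hered1fhc}, \eqref{6hered2} and \eqref{6lim0} verbatim to conclude that every nonzero vector of $M_f$ is hypercyclic, since none of these ever used that $p_1$ was a norm.

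The main obstacle, and essentially the only genuinely new verification, is that $M_f$ is of type~$1$, i.e.\ $M_f\cap\ker p=\{0\}$; this is also where seminorm versions of Lemma~\ref{3lem basic} and Lemma~\ref{3lem equiv} must be justified, but these hold because their arguments use only the inequality~\eqref{3lem1} (arranged via Lemma~\ref{3lem karl}), the homogeneity and triangle inequality of $p_1$, and the coefficient estimate of Remark~\ref{rembdd}, none of which requires $\ker p_1=\{0\}$. Granting this, Remark~\ref{rembdd} with $p_1=p$ still gives $|\alpha_n|\le 2K\,p\big(\sum_k\alpha_k u_k\big)$ for every convergent series, so $\overline{\Span}\{u_n\}\cap\ker p=\{0\}$. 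To transfer the property to $M_f$, write $x=\sum_m\alpha_m f_m$, put $x'=\sum_m\alpha_m u_m$ and $\delta=\sum_m p(f_m-u_m)$; since $\sup_m|\alpha_m|\le 2K\,p(x')$ one gets $p\big(\sum_m\alpha_m(f_m-u_m)\big)\le 2K\delta\,p(x')$, hence $p(x)\ge(1-2K\delta)\,p(x')$. The perturbation already satisfies $\sum_m 4\,p_m(f_m-u_m)<1$, so $\delta\le\sum_m p_m(f_m-u_m)<\tfrac14=\tfrac{1}{2K}$ (with $K=2$), whence $p(x)=0$ forces $p(x')=0$, then $\alpha_m=0$ for all $m$, and finally $x=0$. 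Thus $M_f$ is an infinite-dimensional hypercyclic subspace of type~$1$, which closes the cycle.
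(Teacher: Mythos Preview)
Your proposal is correct and matches the paper's intended approach: the paper states this theorem without proof, presenting it simply as the type~$1$ analogue of Theorem~\ref{6thmherfhc}, so what you have written is precisely the adaptation the author has in mind---replace the continuous norm by the distinguished seminorm $p$, use the infinite-codimension hypothesis to guarantee that each finite-codimension slice of $M$ (or $M_0$) still meets the complement of $\ker p$, and verify at the end that the perturbed subspace $M_f$ remains transverse to $\ker p$ via the coefficient bound of Remark~\ref{rembdd}. One small slip: you write ``perturb $(u_n)$ to $(f_n)\subset Y_0$'', but in the construction of Theorem~\ref{6thmherfhc} the vectors $f_n=u_n+\sum_{l\in A_n}x_l$ are not taken from $Y_0$; only the building blocks $x_l$ come from the maps $S_k$ applied to $Y_0$.
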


In conclusion, we have obtained a characterization of sequences of operators $(T_n)$ possessing hereditarily hypercyclic subspaces of type~$1$ under the assumption that $(T_n)$ satisfies the Hypercyclicity Criterion and a characterization of operators with hypercyclic subspaces of type~$1$ in the case of operators satisfying the Frequent Hypercyclicity Criterion. It would be interesting to know whether there exists an operator $T$ on some Fr\'{e}chet space such that $T$ possesses a hypercyclic subspace but $T$ does not possess hereditarily hypercyclic subspaces and if such an operator can be weakly mixing.


\begin{thebibliography}{HD}

\normalsize
\baselineskip=17pt

\bibitem[1]{Bernal} L. Bernal-Gonz\'alez, \emph{Hypercyclic subspaces in Fréchet spaces}, Proc. Amer. Math. Soc. 134 (2006), 1955--1961.

\bibitem[2]{3Bernal} L. Bernal-Gonz\'alez and A. Montes-Rodr\'iguez, \emph{Non-finite dimensional closed vector spaces of universal functions for composition operators}, J. Approx. Theory 82 (1995), 375--391.

\bibitem[3]{2Bes} J. Bès and A. Peris, \emph{Hereditarily hypercyclic operators}, J. Funct. Anal. 167 (1999), 94--112.

\bibitem[4]{3Bonet} J. Bonet, F. Mart\'inez-Giménez and A. Peris, \emph{Universal and chaotic multipliers on spaces of operators}, J. Math. Anal. Appl. 297 (2004), 599--611.

\bibitem[5]{4Bonilla} A. Bonilla and K.-G. Grosse-Erdmann, \emph{Frequently hypercyclic operators and vectors}, Ergodic Theory Dynam. Systems 27 (2007), 383--404. Erratum: Ergodic Theory Dynam. Systems 29 (2009), 1993--1994.

\bibitem[6]{Bourdon} P. S. Bourdon, \emph{Invariant manifolds of hypercyclic vectors}, Proc. Amer. Math. Soc. 118 (1993), 845--847.

\bibitem[7]{Rosa} M. De La Rosa and C. Read, \emph{A hypercyclic operator whose direct sum $T\oplus T$ is not hypercyclic}, J. Operator Th. 61 (2009), 369--380.

\bibitem[8]{3Gonzalez} M. Gonz\'alez, F. Le\'on-Saavedra and A. Montes-Rodr\'iguez, \emph{Semi-Fredholm theory: hypercyclic and supercyclic subspaces}, Proc. London Math. Soc. (3) 81 (2000), 169--189.

\bibitem[9]{2Grosse2} K.-G. Grosse-Erdmann and A. Peris, \emph{Linear chaos}, Springer, London, 2011.

\bibitem[10]{Herrero} D. A. Herrero, \emph{Limits of hypercyclic and supercyclic operators}, J. Funct. Anal. 99 (1991), 179--190.

\bibitem[11]{4Menet1} Q. Menet, \emph{Sous-espaces fermés de séries universelles sur un espace de Fréchet}, Studia Math. 207 (2011), 181-195.

\bibitem[12]{4Menet2} Q. Menet \emph{Hypercyclic subspaces and weighted shifts}, preprint.

\bibitem[13]{4Menet3} Q. Menet, \emph{Hypercyclic subspaces on Fréchet spaces without continuous norm}, Integral Equations Operator Theory 77 (2013), 489--520.

\bibitem[14]{3Montes} A. Montes-Rodr\'iguez, \emph{Banach spaces of hypercyclic operators}, Michigan Math. J. 43 (1996), 419--436.

\bibitem[15]{3Petersson} H. Petersson, \emph{Hypercyclic subspaces for Fréchet space operators}, J. Math. Anal. Appl. 319 (2006), 764--782.

\end{thebibliography}
\end{document}